\documentclass{amsart}
\usepackage{amsmath,amsfonts,amssymb,stmaryrd}
\usepackage[all]{xy}
\usepackage{graphicx,psfrag,subfigure}

\newtheorem{maintheorem}{Theorem}

\newtheorem{thm}{Theorem}[section]
\newtheorem{rk}{Remark}
\newtheorem{prop}{Proposition}[section]

\newtheorem{lema}{Lemma}[subsection]

\newcommand{\R}{{\mathbb{R}}}

\newcommand{\Z}{{\mathbb{Z}}}
\newcommand{\T}{{\mathbb{T}}}
\newcommand{\N}{{\mathbb{N}}}
\newcommand{\B}{ $\rule{1.2mm}{2mm}$\\}

\begin{document}


\title[Robust Transtivity for Endomorphisms admitting critical points]{ROBUST TRANSITIVITY FOR ENDOMORPHISMS ADMITTING CRITICAL POINTS}

\author[J. Iglesias]{J. Iglesias}
\address{Universidad de La Rep\'ublica. Facultad de Ingenieria. IMERL.
Julio Herrera y Reissig 565. C.P. 11300. Montevideo, Uruguay}
\email{jorgei@fing.edu.uy }

\author[C. Lizana]{C. Lizana}
\address{Departamento de Matem\'aticas. Facultad de Ciencias.
Universidad de los Andes. La Hechicera-M\'erida 5101.
Venezuela} \email{clizana@ula.ve}

\author[A. Portela]{A. Portela}
\address{Universidad de La Rep\'ublica. Facultad de Ingenieria. IMERL.
Julio Herrera y Reissig 565. C.P. 11300. Montevideo, Uruguay }
\email{aldo@fing.edu.uy }

\date{\today}

\maketitle
\begin{abstract}
We address the problem of giving necessary and sufficient conditions in order to have
robustly transitive endomorphisms admitting persistent critical sets. 
We exhibit different type of open examples of robustly transitive maps
 in any isotopic class of endomorphisms acting on the two dimensional torus
admitting persistent critical points.
We also provide some necessary condition for robust transitivity in this setting.
\end{abstract}

\section{Introduction and Statement of the Main Results}

An important goal in dynamics is to study conditions that preserve main  pro\-per\-ties of a given system under perturbations.
In particular, we are interested in robust transitivity, understanding 
by transitivity that  there exists a point with dense forward orbit and by robust transitivity
that all nearby systems of a transitive one are also transitive.
In the diffeomorphisms setting the behavior of such a systems is quite well understood.
It was proved in  \cite{bdp} for compact manifolds of any dimension
that robust transitivity implies a weak form of hyperbolicity, so-called dominated splitting.
 In dimension two, R. Ma\~{n}\'e proved in (\cite{m2}) that robustly transitive implies hyperbolicity
 and moreover, that the only surface that admit such systems is the 2-torus.
 The first examples of non-hyperbolic robustly transitive diffeomorphisms
 were given by  M. Shub (\cite{sh}) in $\T^4$ and by R. Ma\~{n}\'e (\cite{m1}) in $\T^3$.
 In the endomorphisms (non-invertible maps) setting, this kind of behavior is not longer true.
 For endomorphisms, hyperbolicity is not a necessary condition
 in order to have robust transitivity.
The first work that address the problem about necessary and sufficient conditions
for robustly transitive endomorphisms was \cite{lp} and later some examples appeared in \cite{hg}.
In \cite{lp} it was proved that volume expanding is a $C^1-$necessary condition for robustly
 transitive local diffeomorphisms not exhibiting
a dominated splitting (in a robust way). However, it is not a sufficient condition. For further details see \cite{lp}.
\footnote{AMS classification:  37D20, 37D30, 08A35, 35B38. Key words: robustly transitive endomorphisms, critical set.}

So far the study of robust transitivity has been done in the local diffeomorphisms setting, meanwhile
the case of endomorphisms admitting critical points has received less
far attention. The unique known example before this work of a  robustly transitive endomorphism
 with critical points was constructed in \cite{br}. 
We address in this paper the problem of giving necessary and sufficient conditions for having robustly transitive
endomorphisms admitting persistent critical set.
Since we are admitting the existence of critical points, we have that our examples of robustly transitive maps
are not volume expanding, appearing the main difference between the local diffeomorphisms setting
and ours. Hence, volume expanding is not a necessary condition for having robustly transitive endomorphisms
in general. The construction of robustly transitive local diffeomorphisms examples that are
not hyperbolic varies according its homotopy class. In \cite{lp} shows examples in the expanding homotopy class and
\cite{hg} shows examples in the non-hyperbolic and hyperbolic homotopy class. These constructions differ substantially
between them.
The technics used in the diffeomorphisms and local diffeomorphisms setting are not easily adapted under the
presence of critical points. We inspired ourselves in those works to construct our examples.




Let us fix some notation before we state the main results of this work. 
Let $f$ be an {\bf endomorphism} in $\T^2$ that is $f$ is a non-invertible map and
$f_{*}$ the map induced by $f$ in the fundamental group of $\T^2$.
The map $f_{*}$ can be represented by a square matrix of size two by two with integer coefficients.
Since we are interested in the expanding volume setting that is
the module of the determinant of these matrices are greater or equal to two,
then it cannot have non-real eigenvalues of module one.
Consider $\mathcal{M}^{*}_{2\times 2}(\Z) \subset \mathcal{M}_{2\times 2}(\Z) $
 the subset of matrices with two different real eigenvalues and  module of determinant greater or equal to two.
 We denote by $S_f$ the set of critical points of $f,$ that is the set of points
where $Df$ is non-invertible, calling it by {\bf critical set}.
We say that the critical set is {\bf persistent} if there exists $C^1-$neighborhood of $f$ such that all maps
in this neighborhood has non-empty critical set. Hence, the setting is endomorphisms  on the 2-torus
 admitting critical sets.

 In this context, we prove that in the homotopy class of a given matrix $A\in\mathcal{M}^{*}_{2\times 2}(\Z),$
 always there exists a
 $C^1-$robustly transitive map admitting a family of unstable cones and
 a non-empty persistent critical set. Let us state our main result.

\begin{maintheorem}
\label{principal}
 \emph{ For every matrix $A\in \mathcal{M}^{*}_{2\times 2}(\Z),$  there exist $f$ homotopic to
  $A$ and $\mathcal{U}_f$ $C^{1}$-neighborhood   of $f$ such that for
  all $g\in\mathcal{U}_f$ holds that $g$ is transitive, admits a family of unstable cones  and $S_g$ is non-empty.}
\end{maintheorem}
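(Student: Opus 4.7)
The plan is to construct, for each matrix $A\in\mathcal{M}^{*}_{2\times 2}(\Z)$, an explicit deformation $f$ of the linear model $A$ on $\T^{2}$ that introduces a persistent critical set while preserving sufficient unstable expansion to force robust transitivity. I would fix coordinates adapted to an eigenbasis of $A$, so that the strong eigendirection (of largest modulus eigenvalue $\lambda_{1}$) is aligned with one axis and the weak eigendirection (eigenvalue $\lambda_{2}$, either expanding or contracting) with the other. The perturbation is then performed only in the weak direction and only inside a small topological disk $B\subset\T^{2}$.

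\textbf{Step 1 (creating the critical set).} Inside $B$, replace the linear action in the weak direction by a smooth map whose derivative in that direction vanishes along a codimension one curve $\Gamma\subset B$, for example a fold of the form $y\mapsto \varphi(y)$ with $\varphi'$ changing sign. Outside $B$ keep $f=A$. Then $S_{f}\supset\Gamma\neq\emptyset$, and because rank drop is a $C^{1}$-closed transverse condition, every $g$ in a small $C^{1}$-neighborhood $\mathcal{U}_{f}$ still has $S_{g}\neq\emptyset$. By construction $f$ is homotopic to $A$.

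\textbf{Step 2 (unstable cones).} Choose the deformation sufficiently small in the strong direction so that $Df$ maps a fixed cone $C^{u}$ around the $\lambda_{1}$-direction strictly inside itself and expands vectors in $C^{u}$ by a factor $>1$ at every point. Since the perturbation touches only the weak direction, the strong direction survives through $B$; shrinking $\mathcal{U}_{f}$ if necessary, the same cone field is $Dg$-invariant and uniformly expanded for every $g\in\mathcal{U}_{f}$. In particular every $g\in\mathcal{U}_{f}$ admits a family of unstable cones and, via the standard graph transform, local unstable arcs whose length grows exponentially under forward iteration.

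\textbf{Step 3 (robust transitivity).} I would prove transitivity of each $g\in\mathcal{U}_{f}$ in the usual two-step form. (a) Forward density of unstable arcs: any arc $\gamma$ tangent to $C^{u}$ lengthens under iteration and, because $g$ is $C^{0}$-close to the linear map $A$ whose unstable foliation is minimal on $\T^{2}$, some forward image of $\gamma$ must $\varepsilon$-cover $\T^{2}$ for any prescribed $\varepsilon>0$. (b) Density of preimages: show that for every open set $V$ the backward orbit $\bigcup_{n\ge 0} g^{-n}(V)$ is dense. Combining (a) and (b) yields $g^{n}(U)\cap V\neq\emptyset$ for any open $U,V$, hence transitivity.

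\textbf{Main obstacle.} The delicate point is (b), because at $S_{g}$ the map has reduced rank and the volume-expansion argument of \cite{lp} breaks down exactly where it was most useful in the local diffeomorphism case. The hard part will therefore be to arrange the deformation inside $B$ so that, even though $|\det Dg|$ vanishes on $S_{g}$, the topological branching structure of $g$ (inherited from the integer matrix $A$, whose number of preimages is $|\det A|\ge 2$) still guarantees that the preimage tree of any point spreads across $\T^{2}$. I expect that this requires choosing $B$, $\Gamma$ and the size of the fold so that the critical value set $g(S_{g})$ is a tame curve transverse to the unstable cone field, and that $g$ restricted to each of the two sheets meeting along $S_{g}$ is an open map with bounded distortion. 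Once the preimage spreading is established in this quantitative form, it passes to $g\in\mathcal{U}_{f}$ by an elementary perturbation argument, completing the proof.
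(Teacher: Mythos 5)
Your Steps 1--2 match the paper's construction in spirit (a fold supported in a small disk, affecting only the weak direction, with a robust unstable cone field; persistence of $S_g$ because $\det Df$ changes sign). But there is a genuine gap in Step 3, and it is twofold. First, your scheme is applied uniformly to every homotopy class, yet in the non-hyperbolic class ($|\mu|=1$, e.g. $A=\mathrm{diag}(\lambda,1)$) both of your ingredients fail at the linear model itself: the ``unstable foliation'' of $A$ consists of horizontal circles, so it is not minimal and a long unstable arc never becomes $\varepsilon$-dense in $\T^2$; likewise the preimages of a point under $A$ all lie on a single horizontal circle, so density of preimages is false for $A$ and cannot be recovered by taking $g$ close to $A$. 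This is exactly why the paper does not argue perturbatively there but builds a different map (a He--Gan type example gluing circle diffeomorphisms on two annuli, producing saddle and repelling fixed points) and proves transitivity by showing $W^u(p_0)$ and $W^s(p_0)$ are dense.

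Second, even in the classes where your outline is viable, the step you yourself flag as the ``main obstacle'' --- density of preimages for every $g$ near $f$ despite the rank drop on $S_g$ --- is not proved; the appeal to the branching structure inherited from $|\det A|\ge 2$ and to tameness of $g(S_g)$ is a hope, not an argument, and it is not how the difficulty is actually resolved. In the expanding case the paper sidesteps distortion/branching issues entirely: it shows any open set, after finitely many iterates, contains a ball of a fixed radius $r$ centered on an orbit that avoids the perturbation region, and separately that for a finite cover by small disks $B_i$ one has $A^m(B_i)=\T^2$, a property that survives $C^0$-perturbation, so $g^{n}(V)\supset B_{i_0}$ and $g^{m}(B_{i_0})=\T^2$, giving dense preimages. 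In the saddle case the paper does not prove preimage density at all: it uses that every point has an inverse branch staying away from the critical ball (since the perturbation is supported where $A$ has another preimage), grows stable-cone curves backward from $V_2$ to a definite length, grows unstable-cone curves forward from $V_1$, and concludes by an intersection lemma for long curves in transverse irrational cone fields. Without supplying either such mechanism (or a genuine proof of your quantitative preimage-spreading claim), and without a separate treatment of the $|\mu|=1$ class, the proposal does not yet constitute a proof of the theorem.
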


Some remarks about our main result are in order.
As we mentioned above the issue of having robust transitivity plus persistent critical sets
has never been studied before and our result cover a large
class of examples of robustly transitive maps with persistent
critical set. We can restated Theorem \ref{principal} as follows, for e\-very $C^0-$neighborhood of a matrix
$A\in \mathcal{M}^{*}_{2\times 2}(\Z),$ always there exists $C^1-$robustly transitive endomorphism
with persistent critical set admitting an ``expanding direction''.
Meaning by ``expanding direction'' the existence of a family of unstable cones that
are transversal to the kernel of the differential $Df$ at the critical set. We use strongly
in the proof of Theorem \ref{principal} the existence of such a family. Roughly speaking the idea of the proof is the following.
Since we wish to prove robust
transitivity for some map in the homotopy class of $A,$ given any open set, using the existence of the unstable
cones we know that for a finite number of forward iterates we reach a large enough diameter to get out from
the ``bad region", that is the region where we have the critical points. Once we get a large enough diameter
we obtain a point such that its forward orbit is expanding, hence the internal radius of the iterates of the initial open
set growth. Since the initial matrix has several points with dense pre-orbit, the perturbation has $\varepsilon-$dense pre-orbit,
then any pre-orbit
by the perturbation intersect the open set above, finishing the proof.

Since a 2 by 2 matrix with integer coefficients and determinant
greater or equal to two does not admit non-real eigenvalues
in the unit circle, 
we have basically three
options for these type of matrices, two eigenvalues of module greater than 1 (expanding case), one eigenvalue greater than 1 and the other
less than 1 (saddle case), and one eigenvalue equal 1 and the other greater than 1 (non-hyperbolic case).
Because of that we divided the proof of Theorem \ref{principal} in three parts, the expanding (Proposition \ref{prop1}),
the non-hyperbolic (Proposition \ref{prop11})
and
 the saddle (Proposition \ref{prop111})
cases. We splitted the proof in three cases because each homotopy class is approached in a different way.
Unfortunately we do not know a technic that allow us to prove our result without considering the three different
construction.

Some question that arise from the above discussion are the following.
{\it What happens in other surfaces or in torus of higher dimension}?
{\it Is the existence of that type of cones fields  a necessary condition for
having robustly transitive maps admitting critical points}?
{\it Could be the dimension of the kernel
of $Df$ equal to the dimension of the whole manifold}?
We expect that we can generalized Theorem \ref{principal} for
torus in higher dimension but we are not quite sure if it
can be done in other surfaces because of Ma\~{n}\'{e} results.
Concerning the existence of
the unstable cones fields transversal to the kernel of $Df$, we expect
that it is a necessary condition in order to have robust transitivity under the assumption of existence
of critical points.

In this direction, we first prove the following proposition that play a key role in the proof of Theorem \ref{teoA}.
This follows from the existence of critical
points and because we are considering the $C^1-$topology.

\begin{prop}\label{prop-teoA}
 Given $f\!\in \! C^1(M)$ with persistent critical set, 
there exists $g$ $C^1-$close to $f$ with non-empty interior of the critical set.
\end{prop}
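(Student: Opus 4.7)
My plan is to perform a $C^1$-small local modification of $f$ near a single critical point that collapses a kernel direction of $Df_p$, thereby producing an open set of critical points for the perturbation. Persistence of the critical set plays essentially no role beyond ensuring $S_f\neq\emptyset$; the construction is entirely local.

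First I would fix any $p\in S_f$ and choose coordinate charts $(U,\varphi)$ around $p$ and $(V,\xi)$ around $f(p)$, so that $f$ reads locally as a map $\R^n\to\R^n$. I would then rotate the chart at $p$ so that $\ker Df_p$ is aligned with the first $k=\dim\ker Df_p$ coordinate axes; in these coordinates the first $k$ columns of $Df_p$ vanish. In the two-dimensional setting of the paper, $k=1$ at a typical critical point and this just amounts to lining up the kernel with the $x$-axis, making $\partial_x f(p)=0$.

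Next I would fix a small $r>0$ and a smooth nondecreasing function $\psi\colon\R\to\R$ with $\psi\equiv 0$ on $[-r,r]$, $\psi(t)=t$ for $|t|\geq 2r$, and $0\leq\psi'\leq 1$. Shrinking $r$ so that the box $\{|x|\leq 2r\}$ sits compactly in $U$, I would define
\[
g(x,y)\;=\;f(\psi(x),y)
\]
on the chart and $g=f$ elsewhere; these definitions glue smoothly because $\psi(t)=t$ outside $[-2r,2r]$. On the open set $\{|x|<r\}$ the map $g$ does not depend on $x$, so the first column of $Dg$ vanishes identically and $\det Dg\equiv 0$ there; hence $S_g$ has nonempty interior.

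The main task will then be to verify that $g$ can be made $C^1$-close to $f$ by taking $r$ small. The columns of $Dg$ in the $y$-directions are $\partial_y f(\psi(x),y)$, which by uniform continuity of $Df$ on a compact neighborhood of $p$ differ from $\partial_y f(x,y)$ by a quantity tending to zero with $r$, since $|\psi(x)-x|\leq 2r$. The delicate column is the first: it equals $\psi'(x)\,\partial_x f(\psi(x),y)$, and $\psi'$ may jump from $0$ to $1$ in the transition zone $r\leq|x|\leq 2r$. The hard part is precisely to control this jump; it is overcome by the coordinate alignment with $\ker Df_p$, which forces $\partial_x f$ to vanish at $p$ and therefore to be uniformly small on the support of the perturbation. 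Consequently both $\psi'(x)\,\partial_x f(\psi(x),y)$ and the target value $\partial_x f(x,y)$ are uniformly small, so their difference is as well. $C^0$-closeness is immediate from $|\psi(x)-x|\leq 2r$ and continuity of $f$, completing the argument.
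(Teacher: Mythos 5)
Your strategy---precompose $f$ with a map that collapses a kernel direction near a critical point---is genuinely different from the paper's proof. The paper first replaces $f$ by a nearby $C^3$ map that is Whitney-generic (persistence guaranteeing this map still has critical points), takes a fold point where the map reads $(x,y^2)$ in suitable coordinates, and flattens it to $\bar g(x,y)=(x,\varphi(y)y^2)$; there the $C^1$-smallness is essentially free because $y^2$ vanishes quadratically on the support of the cut-off. Your route stays entirely in the $C^1$ category, uses persistence only through $S_f\neq\emptyset$, avoids Whitney's theorem altogether, and correctly isolates the key point: in coordinates aligned with $\ker Df_p$ the column $\partial_x f$ vanishes at $p$, and this is what tames the non-small factor $\psi'-1$ coming from the collapse.

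As written, though, there are two concrete problems. A minor one: no nondecreasing $\psi$ with $\psi\equiv 0$ on $[-r,r]$, $\psi(t)=t$ for $|t|\geq 2r$ and $0\leq\psi'\leq 1$ exists, since the mean value theorem forces $\psi'\geq 2$ somewhere in $[r,2r]$; this is harmless, as you only need $|\psi'|$ bounded by a constant independent of $r$. More seriously, your perturbation is not localized in the $y$-direction: the set $\{|x|\leq 2r\}$ is a vertical strip, which cannot sit compactly in a bounded chart, and if you truncate it to a box then $g(x,y)=f(\psi(x),y)$ fails to agree with $f$ on the horizontal edges of that box, so $g$ is not even continuous; relatedly, your claim that $\partial_x f$ is uniformly small on the support of the perturbation holds only near $p$, not along a whole vertical strip. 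Both defects are cured by cutting off in $y$ as well: set $g(x,y)=f(x+\rho(y)(\psi(x)-x),\,y)$ with $\rho$ a bump function equal to $1$ on $[-r,r]$ and supported in $[-2r,2r]$. Then $g=f$ outside the box $[-2r,2r]^2$, while on $(-r,r)^2$ one has $g(x,y)=f(0,y)$, so $\det Dg\equiv 0$ there and $S_g$ has nonempty interior; the only new term, $\rho'(y)(\psi(x)-x)\,\partial_x f$ appearing in $\partial_y g$, is bounded by a constant times $\sup_{[-2r,2r]^2}|\partial_x f|$, which is small by the same kernel-alignment argument you already use. With this repair your argument is complete and, arguably, more elementary and more general than the paper's, since it needs neither $C^3$ approximation nor the fold normal form.
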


In our next result, we prove that if $M$ is a two dimensional compact manifold, the dimension of
$\mathrm{ker}(Df)$ less or equal $1$ is a necessary condition for
robust transitivity. Concretely, 

\begin{maintheorem}
\label{teoA}
For every $f\in C^1(M)$  robustly transitive map with persistent
critical set follows that:
\begin{enumerate}
\item[1)] There exist $g$ $C^1-$close to $f$ and $x_0$ a critical point for $g$ such that the
forward orbit of $x_0$ by $g$ is dense.
\item[2)] $Df(x,y)$ is  different from the null matrix for every $(x,y)\in S_f.$
\end{enumerate}
\end{maintheorem}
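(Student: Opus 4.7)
For part (1) the plan is to combine Proposition \ref{prop-teoA} with transitivity. We apply Proposition \ref{prop-teoA} to obtain $g$ that is $C^1$-close to $f$ and whose critical set $S_g$ has nonempty interior $U$. Since $f$ is robustly transitive, $g$ is transitive as well, so there exists $y\in M$ with dense forward orbit; density forces $g^n(y)\in U$ for some $n\ge 0$. Setting $x_0:=g^n(y)$ gives a critical point of $g$. Its forward orbit $O^+_g(x_0)$ differs from $O^+_g(y)$ by the finite set $\{y,g(y),\dots,g^{n-1}(y)\}$, so $M=\overline{O^+_g(y)}\subseteq \overline{O^+_g(x_0)}\cup\{y,\dots,g^{n-1}(y)\}$. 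Then $M\setminus\overline{O^+_g(x_0)}$ is open and finite, hence empty in a $2$-manifold, so the forward orbit of $x_0$ is dense.

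For part (2) we argue by contradiction. Suppose $Df(p)$ is the zero matrix at some $p\in S_f$. The main step will be to construct a $C^1$-small perturbation of $f$ that is constant on a neighborhood of $p$. Working in a chart around $p$, we write $f(x)=f(p)+R(x-p)$ with $R(0)=0$ and $DR(0)=0$, take a bump $\phi_\varepsilon$ supported in $B(p,\varepsilon)$ and equal to $1$ on $B(p,\varepsilon/2)$, and define $g(x):=f(p)+(1-\phi_\varepsilon(x))R(x-p)$ on the chart, extending by $f$ elsewhere. Since $DR(y)\to 0$ and $R(y)=o(|y|)$ as $y\to 0$, both $\phi_\varepsilon DR$ and $R\cdot D\phi_\varepsilon$ are uniformly small on the chart, so the $C^1$-distance from $g$ to $f$ tends to $0$ with $\varepsilon$. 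For $\varepsilon$ small enough, $g$ lies in the robustly transitive neighborhood of $f$ but is identically $q:=f(p)$ on $V:=B(p,\varepsilon/2)$.

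We then show that constancy on a nonempty open set rules out transitivity. If $g$ were transitive, pick $y\in M$ with $O^+_g(y)$ dense; some $g^n(y)$ lies in $V$, so $g^{n+1}(y)=q$, and by the same tail-density argument used in part (1) the orbit $O^+_g(q)$ is still dense. But density forces $g^k(q)\in V$ for some $k\ge 0$, which yields $g^{k+1}(q)=q$; the orbit of $q$ is therefore eventually periodic and so finite, contradicting its density in the $2$-manifold $M$.

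The most delicate point will be the $C^1$-estimate for the bump perturbation: the factor of order $1/\varepsilon$ coming from $D\phi_\varepsilon$ must be absorbed by the hypothesis $Df(p)=0$ through $R(y)=o(|y|)$. Once this is handled, Proposition \ref{prop-teoA} and elementary topological considerations (dense orbits in a $2$-manifold are uncountable, eventually periodic orbits are finite) take care of the rest.
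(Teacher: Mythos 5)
Your proposal is correct and follows essentially the same route as the paper: part (1) combines Proposition \ref{prop-teoA} with transitivity of the nearby map (the paper invokes the residual set of dense-orbit points, you use a single dense orbit plus a tail argument, which is equivalent), and part (2) kills a null differential by a $C^1$-small bump-function perturbation that makes the map constant near the point, the $1/\varepsilon$ from the cutoff being absorbed exactly as in the paper by the smallness of $f-f(p)$ and $Df$ near $p$. Your explicit eventually-periodic argument simply fills in the step the paper leaves to the reader when asserting that the perturbed map is not transitive.
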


Since  Theorem (Whitney) \ref{teo1}  holds for two dimensional manifolds and  we use it for proving our result, we
assume that the dimension of the manifold is two.
This is another reason why we prove Theorem \ref{principal} in dimension two. It seems reasonable to expect similar results
for higher dimension.

The paper is organized as follows. In Section~\ref{sec3} we present
the proof of Theorem \ref{principal}
splitting  in three cases as we mentioned above,
expanding (subsection \ref{expC}), non-hyperbolic (subsection \ref{nhc})
and saddle case (subsection \ref{sc}),
presenting a large class of robustly transitive endomorphisms admitting persistent critical sets. In
Section~\ref{sec2} some definitions are given and we prove Proposition \ref{prop-teoA} and Theorem  \ref{teoA}.


\section{Proof of Theorem \ref{principal}}\label{sec3}

\subsection{Expanding case}\label{expC}

In this  section we  consider the case when both eigenvalues of $A$ are greater than $1$. Concretely,

\begin{prop}\label{prop1}
Given a matrix $A\in \mathcal{M}^{*}_{2\times 2}(\Z)$  with eigenvalues
$\lambda$ and $\mu$ with $|\lambda |>|\mu |>1,$ there
exist $f$ homotopic to $A$ and a $C^{1}$-neighborhood  $\mathcal{U}_f$ of $f$ such that
for all $g\in\mathcal{U}_f$ holds that $g$ is transitive, admits a family of unstable cones  and $S_g$ is non-empty.
  \end{prop}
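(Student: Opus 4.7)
The plan is to construct $f$ by a localized perturbation of $A$ that introduces a persistent critical set while preserving a cone field around the strong eigendirection, and then to derive robust transitivity from the interaction between expansion in these cones and the $\varepsilon$-density of preimages under $A$.

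For the construction, I would work in coordinates $(x,y)$ on $\T^2$ aligned with the eigenvectors of $A$, where $x$ corresponds to the strong direction (eigenvalue $\lambda$) and $y$ to the weak direction (eigenvalue $\mu$). Inside a small disk $D$, I would modify only the second component of $A$, taking for instance $f(x,y)=(\lambda x,\mu y+\varphi(x,y))$ where $\varphi$ is a $C^\infty$ bump supported in $D$ chosen so that $\partial_y f_2=\mu+\partial_y\varphi$ changes sign inside $D$. By the intermediate value theorem $S_f=\{\det Df=0\}$ contains a nontrivial curve in $D$, and since $\det Df$ changes sign $S_g$ remains non-empty under any $C^1$-small perturbation $g$, so $S_f$ is persistent. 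Because the first component of $f$ is untouched, a cone $C^u$ of small opening around the $x$-axis is strictly $Df$-invariant and vectors in $C^u$ are uniformly expanded by some factor $\lambda'>1$ at every point of $\T^2$. These cone properties are open in the $C^1$-topology, so there is a whole $C^1$-neighborhood $\mathcal{U}_f$ of $f$ on which the same cone family is invariant and expanding, and every $g\in\mathcal{U}_f$ has non-empty critical set; by shrinking $\mathcal{U}_f$ we may further assume every $g\in\mathcal{U}_f$ is $C^0$-close to $A$.

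To establish transitivity of a given $g\in\mathcal{U}_f$, I fix open sets $U,V\subset\T^2$ and pick a small arc $\gamma\subset U$ tangent to $C^u$. Uniform expansion in $C^u$ gives $\mathrm{length}(g^n(\gamma))\to\infty$, so at some finite time $N_1$ the arc $g^{N_1}(\gamma)$ exits a neighborhood of the critical region and contains a sub-arc $\gamma'$ along which $Dg$ is uniformly hyperbolic in both directions. Iterating a thin rectangle $R\subset U$ around $\gamma$ for a further bounded number of steps, and using that on the complement of the perturbation region $D$ the map $g$ is $C^0$-close to $A$ and hence linearly expanding in both directions, one extracts a sub-rectangle whose image under $g^{N_2}$ contains a ball $B(p,r_0)$ of radius $r_0>0$ independent of $U$. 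Finally, because $A$ is an expanding linear endomorphism of $\T^2$ the preimage set $A^{-k}(q)$ is $\varepsilon$-dense for all $q$ once $k$ is large enough, and this property transfers to $g$ for $g$ sufficiently $C^0$-close to $A$. Taking $\varepsilon<r_0$, for every $q\in V$ there exist $k$ and $z\in g^{-k}(q)$ with $z\in B(p,r_0)\subset g^{N_2}(U)$, whence $g^{N_2+k}(U)\cap V\neq\emptyset$.

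The main obstacle I foresee is the middle step: passing from one-dimensional expansion along an unstable arc to a two-dimensional ball of definite inner radius. Along orbits that repeatedly traverse the critical region the weak direction can collapse, so a priori the iterates of $U$ could remain arbitrarily thin. The delicate point is to select within $g^{N_1}(U)$ a sub-region whose forward orbit spends enough time away from $S_g$ for the weak direction to accumulate a definite amount of expansion, exploiting that $D$ can be chosen very small compared to the region where $g$ coincides with the linearly expanding $A$, and that the strong unstable foliation generated by $C^u$ provides a definite transverse thickness on which to work.
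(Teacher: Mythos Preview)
Your overall strategy matches the paper's: perturb only the second coordinate of $A$ inside a small ball so as to create a persistent critical set while keeping an invariant expanding cone around the $\lambda$-direction, then combine cone-expansion with the $\varepsilon$-density of pre-orbits of maps $C^0$-close to $A$ to get robust transitivity. Your construction of $f$, the persistence of $S_f$, the existence and robustness of the unstable cones, and the last step (pre-orbits of $g$ are dense, hence $g$ is transitive) are all essentially as in the paper.

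The gap is exactly the one you flag, and it is real: you need a point whose forward orbit sees enough two-dimensional expansion, and the argument ``the arc eventually exits the critical region, then iterate a rectangle for a bounded number of further steps'' does not survive the possibility of returning to $S_g$. The paper closes this gap not by controlling the weak direction along a generic orbit, but by producing, inside any open set $V$, a point $y$ whose \emph{entire} forward orbit (after some $n_0$) stays in the region where $g$ coincides with $A$. The mechanism is: choose the perturbation ball $U''\subset U'\subset U$ with $U=B(0,4r)$ so small that $A^i(U\setminus U')\cap U'=\emptyset$ for $i=1,\dots,9$; this persists for $g$ close to $f$. Now any $C^u$-curve of diameter $\geq 10r$ lying in $\T^2\setminus U'$ maps under $g$ to a curve of diameter $\geq 10r$; if this image meets $U'$, it must contain a sub-arc of diameter $\geq r$ in the annulus $U\setminus U'$, and nine further iterates (all staying outside $U'$ by the choice above) stretch that sub-arc by $(\sqrt{2})^9>10$, recovering a curve of diameter $\geq 10r$ in $\T^2\setminus U'$. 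Iterating this selection and taking the nested intersection of the corresponding pre-images in the original arc $\gamma\subset V$ yields a point $y\in V$ with $g^n(y)\in\T^2\setminus U'$ for all $n\geq n_0$. Since $g|_{\T^2\setminus U''}$ is $C^1$-close to the linear expanding map $A$, any neighborhood of such a $y$ eventually contains a ball of a fixed radius $r$, and your final pre-orbit argument then applies verbatim. So your outline is correct but incomplete; the missing ingredient is this ``escape-and-stay-out'' construction via nested sub-arcs.
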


\subsubsection{\bf{Sketch of the Proof}}
Let $A$ as in Proposition \ref{prop1}. Since  $|\lambda |>|\mu |,$ there exist
 unstable cones fields in the direction of $\lambda.$ Denote by $f$ the perturbation 
 of $A$ in a convenient small
 ball centered at the origin such that $f$ coincide with $A$ in the complement of the ball
 and there still exist a family of unstable cones (Lemma \ref{conosinestables}). 
 Then for any given open set, there exist a point such that its forward orbit by $f$ 
 remain in the expanding region (Lemma \ref{lema44}). Hence, in a finite number of 
 forward iterates, the internal radius growth until reaching some fix radius (Lemma \ref{lema3}). Finally,
$A$ has several points with dense pre-orbit, so $f$ has $\varepsilon-$dense pre-orbit,
then any pre-orbit
by $f$ intersect the open set above finishing the proof (Lemma \ref{lema66} and Lemma \ref{lemma7}). 

\subsubsection{\bf{Construction of $f$}}
Let  $A$ be a matrix with  spectrum $\sigma (A)=\{\lambda, \mu \}$,
$\lambda ,\mu \in \R$ and  $|\lambda |>|\mu |>1$.
After a change of  coordinates, if necessary, we may assume that
$$A=\left(\begin{array}{cc}
  \lambda & 0  \\
  0 & \mu  \\
\end{array}%
\right).$$

Let us consider
\begin{itemize}
 \item $\delta_A:=sup\{r>0:  \ A|_{B(x,r)} \mbox{ is a diffeomorphism onto its image } \}$,
 \item $U:=B(0,4r)$ and  $U':=B(0,3r)$ with $4r<\delta_A$.
 \end{itemize}

    Since the eigenvalues of $A$ are greater than one, it follows that
     $A(U\setminus U^{'})\cap U'=\emptyset$.  Let $r$ be small enough such that
   \begin{equation}\label{p1}
 A^{i}(U\setminus U')\cap U'=\emptyset, \mbox{ for }
i=1,...,9.
\end{equation}

Consider $U''=B(0,r/8).$
Fix $\theta >0$ such that $\overline{B(0, 2\theta )}\subset U''$ and consider
$\psi :\R\to\R$  $C^{\infty}-$class such that $\psi (x)=\psi (-x)$,
$x=0$ the unique critical point, $\psi (0)=2\mu$ and $\psi (x)=0$ for
 $|x|\geq\theta$ (see Figure \ref{figura1}(a)). Let $\varphi:\R\to\R$ be such
 that $\varphi (y)=0$ for $y\notin [0,\delta ]$ and  $\varphi' $ is as in
 Figure \ref{figura1}(b), with $\delta <\theta$. Now defined
 $f_{\theta ,\delta }:\R^{2}\to \R^{2}$ by
 $$f_{\theta ,\delta }(x,y)=(\lambda x,\mu y-\psi (x)\varphi (y)).$$

\begin{figure}[ht]
\psfrag{1}{\tiny{$1$}}\psfrag{dd}{\tiny{$\delta/2$}}
\psfrag{d}{\tiny{$\delta$}}
\psfrag{c}{\tiny{$2\mu$}}
\psfrag{a}{\tiny{$-\theta$}}
\psfrag{aa}{\tiny{$\theta$}}
\psfrag{q}{$\psi$}
\psfrag{p}{$\varphi'$}
\begin{center}
\subfigure[]{\includegraphics[scale=0.15]{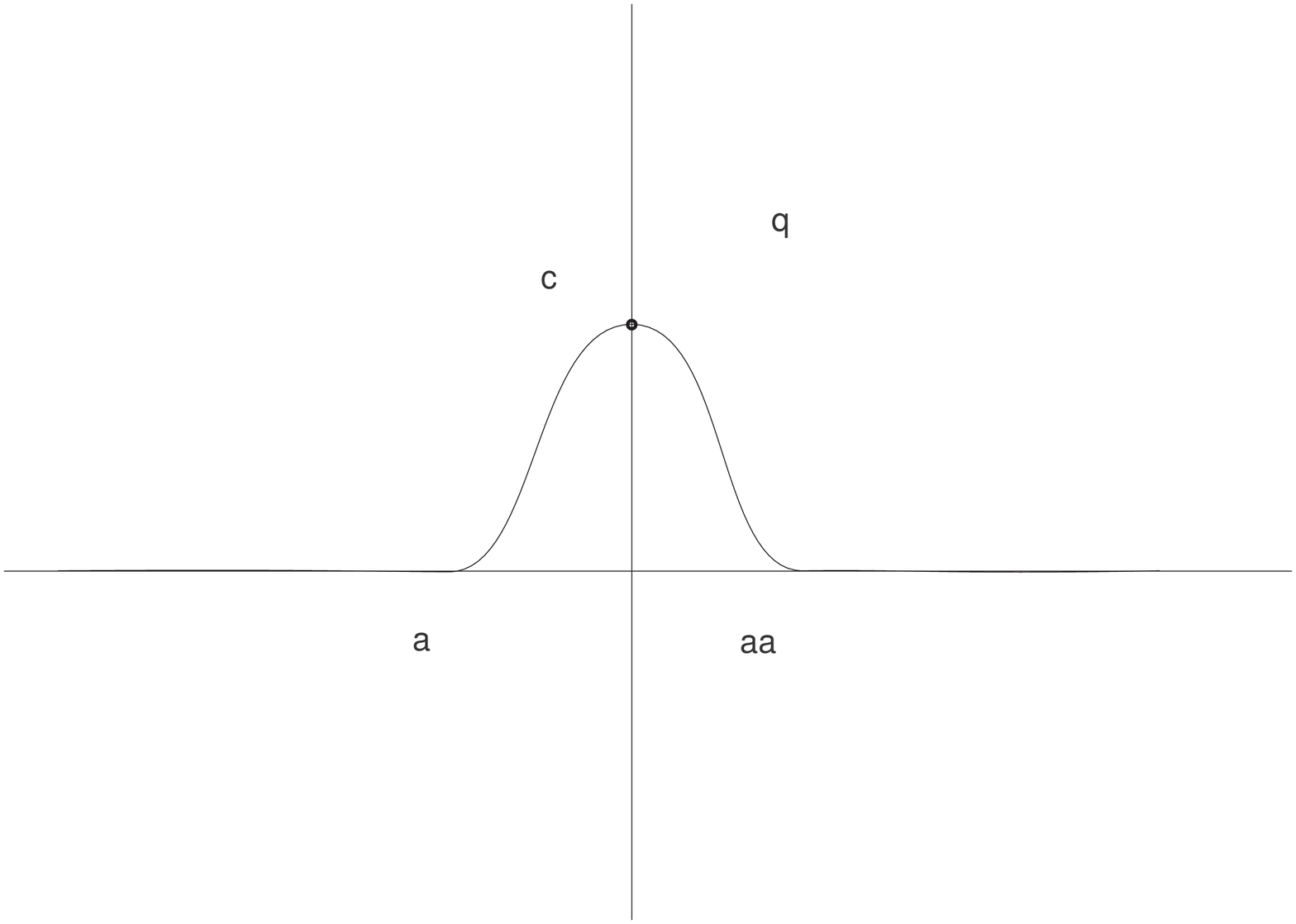}}
\subfigure[]{\includegraphics[scale=0.15]{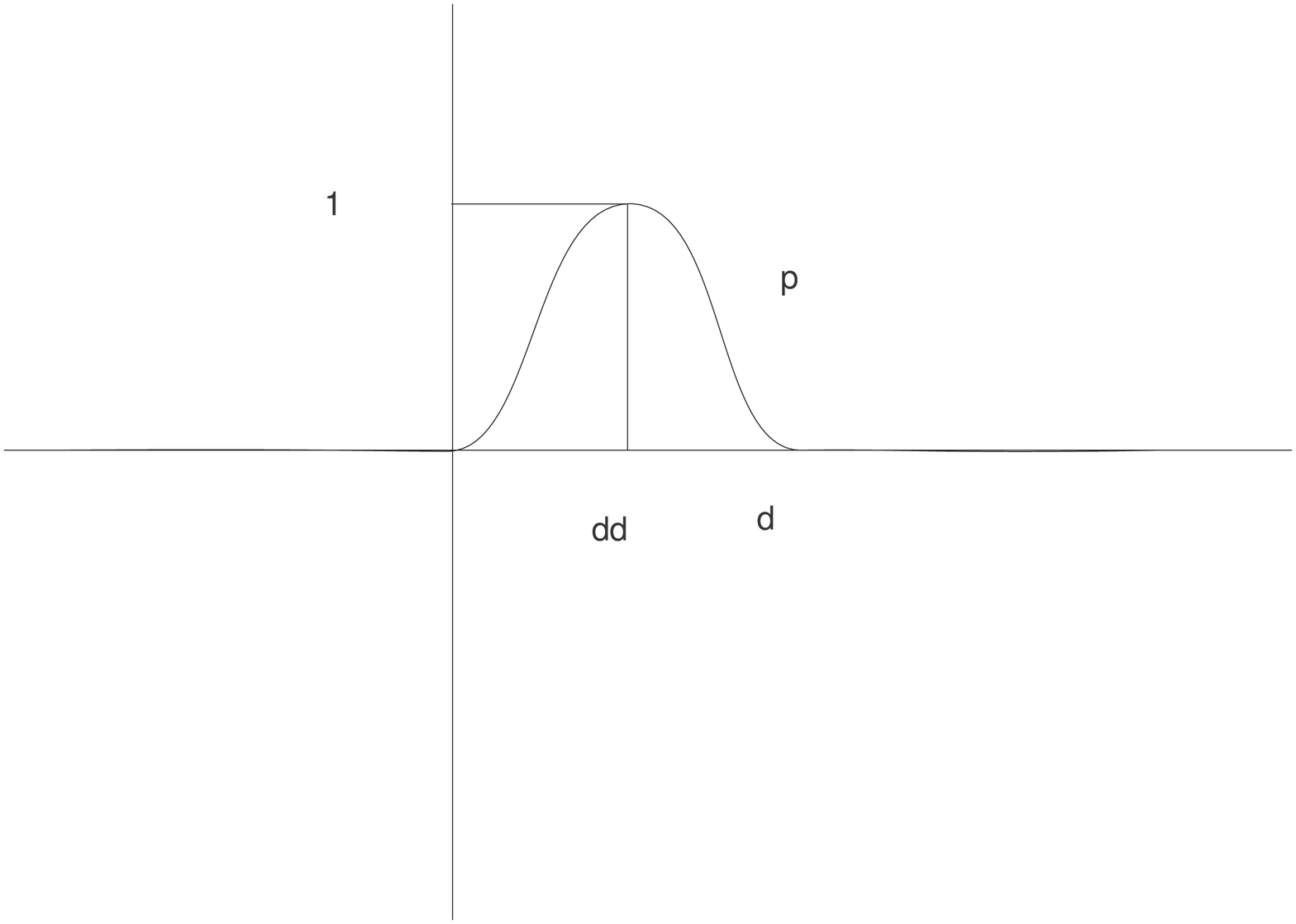}}
\caption{Graph of $\psi$ and $\varphi'$}\label{figura1}
\end{center}
\end{figure}

 For simplicity, we denote $f_{\theta ,\delta }=f$, although $f$ depends on the parameters $\theta$ and $\delta$.

\begin{rk}\label{rk1}
   The following properties are very useful for our purpose. Since they are not hard to prove
   we leave it for the reader to verify.
   \begin{enumerate}
    \item[a)] $f\mid_{\T^{2}\setminus U''} =A_{\T^{2}\setminus U''}.$
    \item[b)] From \eqref{p1} it follows that $f^{i}(U\setminus U')\cap U'=\emptyset,$ for $i=1,...,9.$
    \item[c)] The critical set of $f$ is $S_f=\{(x,y): \mu-\psi (x)\varphi' (y)=0 \}$.
    \item[d)]  For $x=0$ and $y=\delta /2$ we have that $det (Df_{(0,\delta /2)})=-\lambda \mu $
        and for $(x,y)\in \T^{2}\setminus U$ we have that  $det (Df_{(x,y)})=\lambda  \mu$.
        Then there exists a $C^{1}$-neighborhood  $\mathcal{U}_f$ of $f$ such that
        $S_g\neq\emptyset$ for all $g\in\mathcal{U}_f$.
    \item[e)] $f$ goes to $A$ in the $C^{0}$ topology, when  $\theta$ and $\delta$ go to zero.
    \item[f)]  $f$ is homotopic to $A$.
    \end{enumerate}
\end{rk}

Given $a\in \R$ positive and  $p\in\T^{2}$, we
consider $\mathcal{C}^u_a(p)\subset T_p(\T^{2})$    \emph{the family of unstable cones} defined by
$\mathcal{C}_a^u (p)=\{(v_1,v_2)\in T_p(\T^{2}) : \  |v_2|/|v_1| < a \}$.
The following lemma shows that it is possible to construct a family of unstable cones
for the map $f$. For the statement of the following lemma we use the fact that $|\lambda |>\sqrt{2},$
this follows from $|det(A)|\geq 2$.

\begin{lema}[Existence of unstable cones for $f$]\label{conosinestables}
Given $\theta >0$, $a>0$, $\delta >0$  and $\lambda'$ with $\sqrt{2}\leq \lambda'<|\lambda |$, there exist $a_0>0$
and $\delta_0 >0$ with $0<a_0<a$ and $0<\delta_0 <\delta$ such that if
$f=f_{\theta ,\delta_{0} },$ then the following properties hold:
\begin{enumerate}
\item[$(i)$] If $\mathcal{C}_{a_{0}}^u(p)\!\!=\!\!\{(v_1,v_2):   |v_2|/|v_1| <a_0 \},$
        then $\overline{Df_p(\mathcal{C}_{a_{0}}^u(p))}\setminus \{(0,0)\}\subset \mathcal{C}_{{a_{0}}}^u(f(p))$,
        for all  $p\in \T^{2}$;
\item[$(ii)$] If $v\in \mathcal{C}_{a_{0}}^u(p),$ then $|Df_p(v)|\geq \lambda'|v|;$ and
\item[$(iii)$] If $\gamma$ is a curve such that $\gamma'(t)\subset \mathcal{C}^u_{a_{0}}(\gamma(t)),$
         then $\mathrm{diam}(f(\gamma))\geq \lambda' \mathrm{diam}(\gamma )$.
\end{enumerate}
\end{lema}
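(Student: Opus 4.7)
The plan is to work with the explicit form of $Df$ and reduce each conclusion to a pointwise estimate arrangeable by choosing $a_0$ and $\delta_0$ small. A direct computation gives
$$Df_{(x,y)}=\begin{pmatrix}\lambda & 0\\ -\psi'(x)\varphi(y) & \mu-\psi(x)\varphi'(y)\end{pmatrix},$$
so, writing $\alpha=-\psi'(x)\varphi(y)$ and $\beta=\mu-\psi(x)\varphi'(y)$, a vector $v=(v_1,v_2)\in \mathcal{C}^u_{a_0}(p)$ is mapped to $(\lambda v_1,\alpha v_1+\beta v_2)$, whose new slope is bounded by $|\alpha|/|\lambda|+(|\beta|/|\lambda|)\,|v_2/v_1|$. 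Outside $U''$ both $\psi$ and $\psi'$ vanish, so $\alpha=0$, $\beta=\mu$, and $Df_p=A$; the condition $|\mu|<|\lambda|$ then makes (i) and (ii) automatic there, and all the real work concentrates on $U''$.

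For (ii), the bounds $|Df_p(v)|^{2}\geq \lambda^{2}v_{1}^{2}$ and $|v|^{2}\leq (1+a_0^{2})v_{1}^{2}$ for $v\in \mathcal{C}^u_{a_0}(p)$ yield $|Df_p(v)|\geq |\lambda|/\sqrt{1+a_{0}^{2}}\cdot |v|$, so I first fix $a_0>0$ small enough that $|\lambda|/\sqrt{1+a_{0}^{2}}\geq \lambda'$; this is possible because $\lambda'<|\lambda|$. With $a_0$ fixed, condition (i) on $U''$ reduces to the pointwise inequality $|\alpha|<a_0(|\lambda|-|\beta|)$. The key estimate is $|\alpha|\leq \|\psi'\|_{\infty}\|\varphi'\|_{\infty}\,\delta$ (from $\|\varphi\|_{\infty}\leq \delta\|\varphi'\|_{\infty}$), which becomes arbitrarily small by shrinking $\delta$; assuming $\sup_{U''}|\beta|<|\lambda|$, choosing $\delta_0$ sufficiently small provides (i) uniformly on $U''$.

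For (iii), I would integrate (ii) along $\gamma$ after projecting onto the $x$-axis. Because $\gamma'(t)\in \mathcal{C}^u_{a_{0}}$ forces $v_{1}(t)$ to have constant sign, the projection $\gamma_{x}$ is strictly monotone, and $|v_{2}|<a_{0}|v_{1}|$ integrates to $|\Delta y|\leq a_0|\Delta x|$ between any two points of $\gamma$, giving $\mathrm{diam}(\gamma)\leq \sqrt{1+a_{0}^{2}}\,\mathrm{diam}(\gamma_{x})$. Since the first coordinate of $f$ is exactly $\lambda x$, one obtains $\mathrm{diam}(f(\gamma))\geq |\lambda|\,\mathrm{diam}(\gamma_{x})\geq (|\lambda|/\sqrt{1+a_{0}^{2}})\,\mathrm{diam}(\gamma)\geq \lambda'\,\mathrm{diam}(\gamma)$, using the same $a_0$ chosen for (ii).

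The main delicate point I expect is verifying $\sup_{U''}|\beta|<|\lambda|$, since $|\beta|=|\mu-\psi(x)\varphi'(y)|$ can reach up to $|\mu|(1+2\|\varphi'\|_{\infty})$; making this strictly less than $|\lambda|$ constrains how large $\|\varphi'\|_{\infty}$ may be taken, and is the hidden reason the profiles in Figure~\ref{figura1} are drawn with their specific heights. Once that is in place, the rest is a routine cone computation, with the hypothesis $|\lambda|>|\mu|>1$ and the integrality-driven bound $|\lambda|>\sqrt{2}$ (from $|\det A|\geq 2$) providing the room to fit everything together.
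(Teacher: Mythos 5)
Your computation of $Df$ and the slope estimate for item $(i)$ follow the paper's own route almost verbatim (the paper bounds the image slope by $M\delta/|\lambda|+|\mu|a_0/|\lambda|$ with $M=\max|\psi'|$ and then shrinks $\delta_0$), and your treatment of $(ii)$ and $(iii)$ is in fact cleaner: the paper estimates the full quotient $|Df_p(v)|^2/(\lambda')^2|v|^2$ and argues by letting $a_0,\delta_0$ tend to zero, whereas you use only the first coordinate, getting $|Df_p(v)|\geq \bigl(|\lambda|/\sqrt{1+a_0^2}\bigr)|v|\geq\lambda'|v|$ with no condition on $\delta_0$, and $(iii)$ then follows from monotonicity of the horizontal projection (understood, as in the paper, in a lift or for curves of small diameter, since on $\T^2$ the diameter inequality cannot hold for arbitrarily large curves).

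The one step that would fail as you wrote it is your proposed resolution of the bound $\sup|\beta|<|\lambda|$, where $\beta=\mu-\psi(x)\varphi'(y)$. You suggest securing it by taking $\|\varphi'\|_\infty$ small, but that is incompatible with the construction: the critical set exists precisely because $\psi(x)\varphi'(y)$ reaches $2\mu$ (the paper takes $\psi(0)=2\mu$ and $\varphi'(\delta/2)=1$), so the positive maximum of $\varphi'$ cannot be lowered; and your sufficient condition $|\mu|(1+2\|\varphi'\|_\infty)<|\lambda|$ is then unattainable whenever $|\lambda|<3|\mu|$ (e.g.\ an expanding matrix with eigenvalues $2$ and $3$), which is allowed by Proposition \ref{prop1}. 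The point is that where $\varphi'\geq 0$ one has $0\leq\psi\varphi'\leq 2\mu$ and hence $|\beta|\leq|\mu|$ automatically; only the negative part of $\varphi'$ pushes $|\beta|$ above $|\mu|$, and its size is a free parameter of the profile: choosing $\varphi'$ with a tall narrow positive bump and a long shallow negative return (this is what Figure \ref{figura1}(b) encodes) gives $\sup|\beta|\leq|\mu|+2|\mu|\,m^-$ with $m^-=|\min\varphi'|$ as small as desired, independently of $\delta$, which is strictly less than $|\lambda|$ since $|\mu|<|\lambda|$. Note the paper itself simply asserts $|\mu-\psi(x)\varphi'(y)|\leq|\mu|$, which is exactly this fact on the region $\varphi'\geq 0$ and needs the same refinement on the negative part; so you correctly isolated the delicate point, but the fix is the asymmetry of $\varphi'$, not the smallness of $\|\varphi'\|_\infty$.
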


\begin{proof}
Proof of $(i)$. Given $p=(x,y)\in\T^2$ and $a>0,$ pick $a_0$ such that $0<a_0<a.$  Let $v=(v_1,v_2)\in \mathcal{C}_{a_0}^u(p)$. Since

 $$\begin{array}{ll}
                         Df_{(x,y)}(v_1,v_2)&= \left(
                                                 \begin{array}{cc}
                                                 \lambda & 0  \\
                                                 -\psi'(x)\varphi (y) & \mu-\psi (x)\varphi' (y)  \\
                                                 \end{array}
                                               \right)
                                                 \left(\begin{array}{c}
                                                     v_1  \\
                                                      v_2  \\
                                                  \end{array}%
                                                   \right) \\ \\
                          & = ( \lambda v_1, -\psi'(x)\varphi(y)v_1+ (\mu-\psi (x)\varphi' (y))v_2),
                       \end{array}
                       $$
then

 \begin{equation}\label{eq1}
 \frac{|-\psi'(x)\varphi (y)v_1+ (\mu-\psi (x)\varphi' (y))v_2 |}{|\lambda v_1|}\! \leq\!
 \left|\frac{-\psi'(x)\varphi (y)}{\lambda}\right|\! +\! \left|\frac{\mu-\psi (x)\varphi' (y)}{\lambda}\right|  \left|\frac{v_2}{v_1}\right|.
\end{equation}

 Let $M=\max\{| \psi'|\}$. Note that $\max\{| \varphi|\}\leq \delta$
 and $|\mu-\psi (x)\varphi' (y)|\leq |\mu|.$ Hence, from inequality (\ref{eq1}) follows that
$$     \frac{|-\psi^{'}(x)\varphi (y)v_1+ (\mu-\psi (x)\varphi' (y))v_2 |}{|\lambda v_1|} \leq
\frac{M\delta}{|\lambda|}+\frac{|\mu|}{|\lambda|} \left|\frac{v_2}{v_1}\right|   
<  \frac{M\delta}{|\lambda|}+\frac{|\mu |a_0}{|\lambda|}.$$
Since $|\lambda| >|\mu|$, taking $\delta_0$ small enough we obtain that
$\frac{M\delta_0}{|\lambda|}+\frac{|\mu |a_0}{|\lambda|}<a_0$ which finishes the proof of (i).

\noindent Proof of $(ii)$. Let $\lambda'$ be such that $\sqrt{2}\leq\lambda'<|\lambda|$. Note that
$$\begin{array}{ll}
    \left (\dfrac{|Df_{(x,y)}(v_1,v_2)|}{|\lambda'(v_1,v_2)|} \right )^{2} &= \dfrac{ (\lambda v_1)^{2} +(-\psi'(x)\varphi (y)v_1+ (\mu-\psi (x)\varphi' (y))v_2)^{2}}{ (\lambda')^{2}(v_1^{2}+v_2^{2}) } \\ \\
     & = \dfrac{ \lambda^{2} +(-\psi'(x)\varphi (y)+ (\mu-\psi (x)\varphi' (y))\frac{v_2}{v_1})^{2}}{ (\lambda')^{2}\left(1+\left (\frac{v_2}{v_1}\right)^{2}\right) }.
  \end{array}
 $$

%
%

Taking $\delta_0$  and $a_0$ small enough, we get that $-\psi'(x)\varphi (y)$ is arbitrarily close to zero,
 $(\mu-\psi (x)\varphi' (y))\frac{v_2}{v_1}$ is also close to
zero  and $\left(1+\left(\frac{v_2}{v_1}\right)^{2}\right)$ is close to one.
Then 
there exist $a_0$ and $\delta_0$, as close to zero as necessary, such that
$$  \left (\frac{|Df_{(x,y)}(v_1,v_2)|}{|\lambda^{'}(v_1,v_2)|} \right )^{2}>1,$$
and the thesis follows.

\noindent Proof of $(iii)$. It follows from the previous items.
\end{proof}

Note that 
the properties $(i)$, $(ii)$ and $(iii)$ in the previous lemma are robust.
In concrete we have the following result.

\begin{lema}\label{clly1}
For every $f$ satisfying 
 Lemma \ref{conosinestables}, there exists $\mathcal{U}_f$ a $C^{1}$-neighborhood
 of $f$ such that for every $g\in\mathcal{U}_f$
 the properties $(i)$, $(ii)$ and $(iii)$ of Lemma \ref{conosinestables} hold.
\end{lema}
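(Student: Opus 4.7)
The plan is to show that each of properties (i), (ii), (iii) is an open condition in the $C^1$-topology, by combining the strict slack in the inequalities from the proof of Lemma \ref{conosinestables} with uniformity in $p\in\T^2$ supplied by compactness.

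For (i), the proof established the strict bound $M\delta_0/|\lambda|+|\mu|a_0/|\lambda|<a_0$ independently of $p$, so there is a uniform gap $\eta>0$ by which the ratio $|Df_p(v)_2|/|Df_p(v)_1|$ falls below $a_0$ on $\mathcal{C}^u_{a_0}(p)$. For $g$ with $\|g-f\|_{C^1}<\varepsilon$, one writes $Dg_p(v)=Df_p(v)+E_p(v)$ with $|E_p(v)|\le\varepsilon|v|$, and since $|v|\le\sqrt{1+a_0^2}\,|v_1|$ on the cone, the perturbed ratio differs from the unperturbed by at most a constant multiple of $\varepsilon$. Choosing $\varepsilon$ smaller than $\eta$ divided by this constant preserves the cone inclusion $\overline{Dg_p(\mathcal{C}^u_{a_0}(p))}\setminus\{(0,0)\}\subset \mathcal{C}^u_{a_0}(g(p))$ for every $g\in\mathcal{U}_f$; note that the target cone is intrinsic to vectors, so the fact that $g(p)\ne f(p)$ is immaterial here. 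A parallel argument for (ii) uses that the proof established $|Df_p(v)|^2>(\lambda')^2|v|^2$ with a uniform positive margin, which again survives a sufficiently small $C^1$-perturbation.

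Property (iii) then follows directly from (i) and (ii) applied to $g$: if $\gamma'(t)\in\mathcal{C}^u_{a_0}(\gamma(t))$, then (i) ensures $(g\circ\gamma)'(t)\in\mathcal{C}^u_{a_0}(g(\gamma(t)))$ and (ii) gives $|(g\circ\gamma)'(t)|\ge\lambda'|\gamma'(t)|$, hence $\mathrm{length}(g\circ\gamma)\ge\lambda'\,\mathrm{length}(\gamma)$. Since curves tangent to $\mathcal{C}^u_{a_0}$ have a strictly monotone first coordinate, their diameter differs from both their horizontal extent and their arc length by at most a factor $\sqrt{1+a_0^2}$; absorbing this factor into the choice of $a_0$ (which is already a free parameter as small as needed in Lemma \ref{conosinestables}) yields $\mathrm{diam}(g\circ\gamma)\ge\lambda'\,\mathrm{diam}(\gamma)$.

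The main technical point is organising the uniform estimates in $p$, but this is routine: on the compact torus $\T^2$ the $C^0$-norm $\|Dg-Df\|_{C^0}$ bounds all the relevant perturbation errors uniformly, so no new idea beyond Lemma \ref{conosinestables} is required.
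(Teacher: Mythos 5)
Your argument is correct and is exactly the route the paper intends: the paper simply states that properties $(i)$, $(ii)$ and $(iii)$ of Lemma \ref{conosinestables} are open in the $C^1$-topology and leaves the verification as an exercise, and your write-up supplies that verification (uniform strict margins from the cone estimates, compactness of $\T^2$, constancy of the cone field so that $g(p)\neq f(p)$ is harmless, and $(iii)$ deduced from $(i)$--$(ii)$ with the $\sqrt{1+a_0^2}$ slack absorbed by the choice of parameters). No discrepancy with the paper's approach.
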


\begin{proof}
 We leave the proof as an easy exercise for the reader to verify that
 $(i)$, $(ii)$ and $(iii)$ of Lemma \ref{conosinestables} are open properties.
\end{proof}

\begin{lema}\label{lema44}
If $f$ is as in Lemma \ref{conosinestables}, then there exists $\mathcal{U}_f$ a $C^{1}$-neighborhood
of $f$ such that for every $g\in\mathcal{U}_f$ and every $V\subset \T^{2}$ open set,
 there exist $y\in V$ and $n_0\in \N$ such that   $g^{n}(y)\in \T^{2}\setminus U'$ for all $n\geq n_0$.
\end{lema}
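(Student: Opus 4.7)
The plan is to exploit the robust expanding unstable cone field guaranteed by Lemma \ref{clly1} to build, inside the given open set $V$, a decreasing nested sequence of sub-arcs whose forward $g$-iterates stay outside $\overline{U'}$ from a fixed time on; any accumulation point of this construction provides the desired $y$.

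Shrink $\mathcal{U}_f$ so that every $g\in\mathcal{U}_f$ inherits the cone field $\mathcal{C}^u_{a_0}$ with uniform expansion factor $\lambda'>\sqrt{2}$ (Lemma \ref{clly1}). Inside $V$, pick a short $C^1$-arc $\gamma_0$ tangent to $\mathcal{C}^u_{a_0}$. Iterating Lemma \ref{conosinestables}(iii), $\mathrm{diam}(g^n(\gamma_0))\geq(\lambda')^n\mathrm{diam}(\gamma_0)$, so for some first index $n_1$ the curve $g^{n_1}(\gamma_0)$ is long enough to contain a connected sub-arc $\alpha_1\subset\T^2\setminus\overline{U'}$ of length at least a threshold $\ell_0>0$ to be fixed below.

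Inductive step. Given an unstable sub-arc $\alpha_k\subset\T^2\setminus\overline{U'}$ of length $\geq\ell_0$, note that $\alpha_k$ lies outside $\overline{U''}\subset\overline{U'}$, so $g$ acts on a neighborhood of $\alpha_k$ as a small $C^1$-perturbation of $A$. Hence $g(\alpha_k)$ is again an unstable arc of length $\geq\lambda'\ell_0$. The intersection $g(\alpha_k)\cap\overline{U'}$ has finitely many connected components, each of length bounded by a constant $C_1$ depending only on $a_0$ and $\mathrm{diam}\,\overline{U'}$, and the number of such components is bounded linearly by the length of $g(\alpha_k)$ through a geometric count of how a near-horizontal curve in $\T^2$ can cross the fixed small disk $\overline{U'}$. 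A pigeonhole argument then produces, for $r$ (hence $\mathrm{diam}\,\overline{U'}$) sufficiently small so that the expansion beats the geometric cost, a connected sub-arc $\alpha_{k+1}\subset g(\alpha_k)\setminus\overline{U'}$ of length $\geq\ell_0$. If a single $g$-step does not suffice to recover the threshold, I would iterate $g$ a fixed number $m$ of extra times first and prune the sub-arc at each intermediate time to keep it outside $\overline{U'}$ throughout.

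Let $\beta_k\subset\gamma_0$ denote the sub-interval with $g^{n_1+k-1}(\beta_k)=\alpha_k$; since $\alpha_{k+1}\subset g(\alpha_k)$, the arcs $\overline{\beta_k}$ form a nested family of non-empty compact intervals. Any $y\in\bigcap_k\overline{\beta_k}\subset V$ then satisfies $g^{n_1+k-1}(y)\in\alpha_k\subset\T^2\setminus U'$ for every $k\geq 1$, which gives the conclusion with $n_0=n_1$. The main obstacle is the quantitative inductive step, i.e. showing that a near-horizontal unstable curve of sufficient length in $\T^2$ always contains a connected sub-arc of length $\geq\ell_0$ outside the small fixed disk $\overline{U'}$; this rests on bounding both the number and the individual length of the curve's passes through $\overline{U'}$ linearly in terms of its length, so that the geometric expansion $\lambda'>\sqrt{2}$ wins against the geometric cost of each pass. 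All the ingredients are $C^1$-open, which yields the uniform validity for $g$ in a whole $C^1$-neighborhood of $f$.
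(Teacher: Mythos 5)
Your overall skeleton (take an unstable arc in $V$, expand it, repeatedly select sub-arcs avoiding $U'$, pull back to a nested family of sub-arcs of $\gamma_0$, and take a point in the intersection) is exactly the structure of the paper's proof, and the robustness via Lemma \ref{clly1} is also as in the paper. However, the step you yourself flag as ``the main obstacle'' is a genuine gap, and as stated it is false in the regime of this construction. The expansion constant available on the unstable cones is only $\lambda'$ slightly larger than $\sqrt{2}$ (since $|\det A|\geq 2$ only gives $|\lambda|>\sqrt{2}$). An unstable arc of length $\ell_0$ outside $\overline{U'}$ has image of length about $\lambda'\ell_0$, and in the worst case a single pass of this nearly horizontal image through the ball $\overline{U'}$ (of diameter $6r$) cuts it in the middle, leaving two pieces each of length at most roughly $(\lambda'\ell_0-6r)/2$, which is strictly less than $\ell_0$ whenever $\lambda'<2$. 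So no choice of threshold $\ell_0$ (nor making $r$ ``small'', since $\ell_0$ must scale with $r$ and in any case $r$ is fixed before this lemma) makes the pigeonhole close. Your fallback of iterating $m$ extra times while ``pruning at each intermediate time'' fails for the same reason: each pruning can halve the surviving piece while each iterate only multiplies its length by about $\sqrt{2}<2$, so the threshold is never recovered, and without pruning you cannot guarantee the intermediate iterates stay outside $U'$.

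The paper closes precisely this gap by a mechanism built into the construction of $f$ that your argument never uses: condition \eqref{p1} (Remark \ref{rk1}, item (b)), namely $g^{i}(U\setminus U')\cap U'=\emptyset$ for $i=1,\dots,9$, which is a $C^1$-open property. When $g(\alpha)$ meets $U'$, one does not look for a long piece outside $U'$; since the curve must cross the annulus $U\setminus U'$ (width $r$, as $\mathrm{diam}(U')=6r$ and $\mathrm{diam}(U)=8r$), one extracts a short sub-arc of diameter at least $r$ contained in $U\setminus U'$, and the structural condition guarantees its next $9$ iterates automatically avoid $U'$, during which the diameter grows by $(\sqrt{2})^{9}\geq 10$ back above the threshold $10r$. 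This is the missing idea; with it, your nested pull-back argument goes through exactly as in the paper's Claims 1 and 2, but without it the inductive step cannot be completed.
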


\begin{proof}

 Let $\mathcal{U}_f$ be a $C^{1}$-neighborhood of $f$ such that for all $g\in \mathcal{U}_f$ the following properties hold:
  \begin{itemize}
  \item $g$ satisfies the thesis of Lemma \ref{conosinestables}, taking $\lambda^{'}=\sqrt{2}$,
  \item $g^{i}(U\setminus U')\cap U'=\emptyset$, for
$i=1,...,9$ (see the definition of $f$) and
\item $g\mid_{\T^{2}\setminus U'}$ is expanding, i.e. the eigenvalues
of $Dg(p)$ are greater than one for $p\in \T^{2}\setminus U'$
(this is possible because $f\mid_{\T^{2}\setminus U''}=A\mid_{\T^{2}\setminus U''}$).
\end{itemize}

\noindent{\bf{Claim 1:}} If $\alpha$ is a curve such that  $\alpha'(t)\subset \mathcal{C}^u_{a_0}(\alpha(t))$,
$\alpha\subset \T^{2}\setminus U'$ and
  $diam(\alpha)\geq 10r$, then there exist $m\in\N$ and curves
  $\alpha_0,...,\alpha_m$ with $\alpha_0=\alpha$ such that:

  \begin{enumerate}
 \item $\alpha_i\subset \T^{2}\setminus U'$, $\forall i=0,...,m$,
  \item $g(\alpha_i)\supset \alpha_{i+1}$,  $\forall i=0,...,m-1$ and
  \item $diam(\alpha_m)\geq 10r$.
\end{enumerate}

Note that since $\alpha\subset \T^{2}\setminus U',$ then $diam (g(\alpha ))\geq 10r$.

If $g(\alpha)\cap U'=\emptyset$, then taking $m=1$ and $\alpha_1=g(\alpha )$ we are done.
If $g(\alpha)\cap U'\neq \emptyset$,
since $diam (U')=6r$ and  $diam (U)=8r$, there exists
$\alpha_1\subset g (\alpha )$ with $diam (\alpha_1)\geq r$ and  $\alpha_1\subset U\setminus U'$.
So we define $\alpha_{i+1}=g^{i}(\alpha_1)$ for $i=1,...,9$. By hypothesis
 $g^{i}(U\setminus U')\cap U'=\emptyset$, for
$i=1,...,9$, therefore $\alpha_i\subset \T^{2}\setminus U'$ and $diam
(\alpha_i)\geq (\sqrt{2})^{i}r$ for $i=1,...,9$. Since $(\sqrt{2})^{9}\geq
10$, then $diam(\alpha_{10})\geq 10r$. In this case, it is enough
to take $m=10$.

\noindent {\bf{Claim 2:}} Let $V$ be a open set and let $\gamma$ be a curve contained in
 $V$ with $\gamma'(t)\subset \mathcal{C}^u_{a_{0}}(\gamma(t))$.
There exist a sequence of curves $\{\gamma_n\}$ and $n_0\in\N$ such that:
\begin{enumerate}
  \item $\gamma_0\subset g^{n_{0}}(\gamma$), $g(\gamma_n)\supset \gamma_{n+1}$ and
  \item $\gamma_n\subset \T^{2}\setminus U'$ for every $n\geq n_0$.
\end{enumerate}

 By Lemma \ref{conosinestables} there exists $n_0\in \N$ such that $g^{n_{0}}(\gamma)$
 contains a curve $\gamma_0$ with $diam (\gamma_0)\geq 10r$ and $\gamma_0\subset \T^{2}\setminus U'$.
 Taking  $\alpha =\gamma_0$ and using claim 1 we obtain the sequence $\{\gamma _n\}$.

Given an open set $V$ and a curve $\gamma$ contained in $V$ with
$\gamma'(t)\subset \mathcal{C}^u_{a{_{0}}}(\gamma(t))$, let $\{\gamma_n\}$
be the sequence given by claim 2.
For every $\gamma_n,$ let $\gamma'_n\subset\gamma$ be such that
$g^{n_{0}+n}(\gamma'_n)=\gamma_n$. Since
$g(\gamma_n)\supset\gamma_{n+1}$ then $\gamma'_{n+1}\subset
\gamma'_n$. Consider $B=\cap\gamma'_n$. If $y\in B,$ then
 $g^{n}(y)\in \T^{2}\setminus U'$ for all $n\geq n_0$, and this proves the lemma.
\end{proof}

From now on we  assume that $f$ satisfies the thesis of
 Lemma \ref{conosinestables}.


\begin{lema}\label{lema3}
 There exists $\mathcal{U}_f$ $C^{1}$-neighborhood  of $f$ such that if
 $g\in\mathcal{U}_f$ and $y\in\T^2$ satisfies that $g^{n}(y)\in \T^{2}\setminus U'$
 for all $n\in \N$, then if $V$ is an open neighborhood of $y$, there exists $n_{_{V}}\in\N$
 such that if $n\geq n_{_{V}}$, $g^{n}(V) \supset B(g^{n}(y),r)$.
\end{lema}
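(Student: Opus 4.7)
The plan is to construct, for each sufficiently large $n$, a well-defined inverse branch $h_n\colon B(g^n(y),r)\to \T^2$ of $g^n$ that sends $g^n(y)$ back to $y$ and is a uniform contraction, so that its image eventually falls inside any prescribed neighborhood $V$ of $y$. Once this is achieved, the identity $g^n\circ h_n=\mathrm{id}$ on $B(g^n(y),r)$ immediately gives $B(g^n(y),r)\subset g^n(V)$, which is the desired conclusion.

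To set things up, I would shrink $\mathcal{U}_f$ so that every $g\in\mathcal{U}_f$ satisfies (a) $g$ is uniformly expanding on $\T^{2}\setminus U''$ with some factor $\kappa>1$, and (b) $g$ restricts to a diffeomorphism on every ball of radius $r$ centered at a point of $\T^{2}\setminus U''$. Both properties are readily ensured from the construction: $f=A$ on $\T^{2}\setminus U''$ is linear with eigenvalues of modulus larger than $1$, and $4r<\delta_{A}$ implies that $A$ is a diffeomorphism on balls of radius $4r$, so by $C^{1}$-closeness $g$ inherits both features. The key geometric observation is that since each $g^{k}(y)$ lies at distance at least $3r$ from the origin, while $U''=B(0,r/8)$, every ball $B(g^{k}(y),s)$ with $s\leq r$ is contained in $\T^{2}\setminus U''$; in particular, all the intermediate balls that will appear during the pullback procedure miss the critical set.

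With this in place, $h_n$ is built inductively. Starting from $B(g^{n}(y),r)$ and pulling back via the branch of $g^{-1}$ that maps $g^{n-k}(y)$ to $g^{n-k-1}(y)$, the expansion of $g$ by a factor $\kappa$ on $B(g^{n-k-1}(y), r/\kappa^{k+1})$ guarantees that this branch is defined on $B(g^{n-k}(y), r/\kappa^{k})$ and carries it into $B(g^{n-k-1}(y), r/\kappa^{k+1})$. Composing all $n$ such branches produces $h_n$ with $h_{n}(B(g^{n}(y),r))\subset B(y, r/\kappa^{n})$. Choosing $\varepsilon>0$ with $B(y,\varepsilon)\subset V$ and $n_{_{V}}$ such that $r/\kappa^{n_{_{V}}}<\varepsilon$, it follows that for every $n\geq n_{_{V}}$ the set $h_{n}(B(g^{n}(y),r))$ lies inside $V$, and applying $g^{n}$ yields $g^{n}(V)\supset g^{n}(h_{n}(B(g^{n}(y),r)))=B(g^{n}(y),r)$.

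The main obstacle I foresee is guaranteeing that each inverse branch is defined on the entire image ball $B(g^{n}(y),r)$ of the fixed radius $r$, and not merely on some uniform smaller sub-ball. This is precisely where the rigidity built into the construction of $f$ pays off: because $f=A$ on $\T^{2}\setminus U''$ and $4r<\delta_{A}$, one may assume $g$ behaves almost like a linear expansion on $\T^{2}\setminus U''$, in particular carrying small balls diffeomorphically onto balls whose size is amplified by a factor close to $|\mu|>1$. Together with the geometric fact that the pullback balls never re-enter $U''$, this keeps the inverse branches available on balls of radius $r$ at every stage of the induction.
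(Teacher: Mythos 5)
Your argument is correct and is essentially the detailed version of the paper's one-line proof: both rest on the facts that $f$ coincides with the expanding linear map $A$ on $\T^{2}\setminus U''$ and that the gap between $\partial U''$ and $\partial U'$ exceeds $2r$, so the balls of radius at most $r$ centered along the orbit $g^{k}(y)$ stay in the region where $g$ is an injective, uniformly expanding near-linear map, and your contracting inverse-branch pullback $h_n$ is the standard way to turn this into $g^{n}(V)\supset B(g^{n}(y),r)$. One small caveat: your auxiliary claim (b) is stated too broadly (a ball of radius $r$ centered at a point of $\T^{2}\setminus U''$ that is close to $U''$ can still meet the critical region, where $g$ need not be a diffeomorphism), but this is harmless because your induction only uses balls centered at the orbit points, which lie outside $U'$ and hence at distance greater than $2r$ from $U''$.
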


\begin{proof}
 It follows from the fact that
$f\!\mid_{\T^{2}\setminus U''}= A\!\mid_{\T^{2}\setminus U''}$ and $dist(\partial U'', \partial U')>2r$.
\end{proof}

\begin{lema}\label{lema66}
 Given $\varepsilon >0,$ there exist open sets $B_1,...,B_n$ and $m\in\N$ such that:
 \begin{enumerate}
  \item[(i)] $\cup_{i=1}^{n}B_i=\T^{2}$, $diam (B_i)<\varepsilon $ and $A^{m}(B_i)=\T^{2}$ for every $i=1\ldots,n$;
  \item[(ii)] There exists a $C^{0}$-neighborhood $\mathcal{U}_A$ of $A$ such that
	      $g^{m}(B_i)=\T^{2}$  for all $g\in  \mathcal{U}_A,$ for every $i=1\ldots,n$.
 \end{enumerate}
\end{lema}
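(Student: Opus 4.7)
The plan is to exploit that $A$ is a linear expanding map on $\T^2$ (both eigenvalues of modulus greater than $1$), so $A^m$ is uniformly expanding with minimum singular value $\sigma_{\min}(A^m)\to\infty$, and the preimages of any $q\in\T^2$ under $A^m$ form a translate of the sublattice $A^{-m}\Z^2$, whose mesh tends to zero.

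First I would cover $\T^2$ by finitely many open balls $B_i = B(x_i, \varepsilon/3)$, so that $\operatorname{diam}(B_i)<\varepsilon$. I would then fix $m$ so large that the covering radius of the lattice $A^{-m}\Z^2$ in $\T^2$ is strictly smaller than $\varepsilon/3$. Item (i) follows at once: for every $q\in\T^2$ and every $i$, the $\varepsilon/3$-dense set $A^{-m}(q)$ meets $B_i$, so $q\in A^m(B_i)$, hence $A^m(B_i)=\T^2$.

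For item (ii) I would use a uniform local-degree argument. Since $A^m$ is a local diffeomorphism, there is $\delta_0>0$ (comparable to half the shortest nonzero vector of $A^{-m}\Z^2$) such that for every $q\in\T^2$ and every preimage $p_0\in A^{-m}(q)$, the restriction $A^m|_{B(p_0,\delta_0)}$ is a diffeomorphism whose image contains a disk of radius $R:=\sigma_{\min}(A^m)\,\delta_0$ around $q$. After possibly enlarging $m$, I can arrange that $\delta_0$ plus the covering radius of $A^{-m}\Z^2$ stays below $\varepsilon/3$, so that for every $q$ I can pick $p_0(q)\in A^{-m}(q)$ with $B(p_0(q),\delta_0)\subset B_i$. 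The local Brouwer degree of $A^m|_{\overline{B(p_0,\delta_0)}}$ around $q$ is then $\pm 1$, and the boundary image stays at distance $\geq R$ from $q$ uniformly in $q$. For every $g$ with $\|g^m-A^m\|_{C^0}<R/2$, the straight-line homotopy from $A^m$ to $g^m$ never brings $q$ onto the boundary image, so the local degree is preserved, giving $q\in g^m(B(p_0(q),\delta_0))\subset g^m(B_i)$. Since $g\mapsto g^m$ is continuous in the $C^0$-topology, a sufficiently small $C^0$-neighborhood $\mathcal{U}_A$ of $A$ works simultaneously for all $i$.

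The main obstacle I foresee is the uniform-in-$q$ bookkeeping of the local-degree argument: both the mesh of $A^{-m}\Z^2$ and the injectivity radius of $A^m$ tend to zero with $m$, and I must check they shrink at compatible rates to allow a single choice of $m$, a single $\delta_0>0$, and a single neighborhood $\mathcal{U}_A$ that simultaneously works for every $q\in\T^2$ and every $B_i$. Because $A$ is linear, all of the relevant constants are explicit functions of the eigenvalues $\lambda,\mu$ of $A$, so this tracking is routine but has to be done carefully.
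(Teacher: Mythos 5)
Your proposal is correct and follows essentially the same strategy as the paper: cover $\T^{2}$ by disks of small diameter and use the expansion of $A$ to get $A^{m}(B_i)=\T^{2}$ for item (i), then for every target point find a small region inside each $B_i$ that $A^{m}$ maps onto a ball of uniform radius and conclude item (ii) by $C^{0}$-stability of this surjectivity. The only difference is in presentation: you extract the uniform radius explicitly from the lattice $A^{-m}\Z^{2}$ and the singular values of $A^{m}$ and justify the perturbation step with a local degree argument, whereas the paper obtains the uniform $\delta_i$ by a compactness (contradiction) argument and passes from $d(g^{m},A^{m})<\delta/8$ to $g^{m}(V_i)\supset B(x,\delta/16)$ without spelling out the topological step you make explicit.
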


\begin{proof}
 Proof of (i). Take a finite covering of $\T^{2}$ by disks of radius less than $\varepsilon/4$.
 Let $B_1,...,B_n$ be such disks. As $A$ is expanding, there exists $m\in \N$ such that
 $A^{m}(B_i)=\T^{2}$, for $i=1,...,n$.

\noindent Proof of (ii). We begin by proving that for a fixed $i$ there exists $\delta_i>0$ such that
 for all $x\in\T^{2}$ there exists $V\subset B_i$ such that $A^{m}:V\to B(x,\delta_i)$ is an
 homeomorphism. In fact, let us assume  that there exists a  sequence  $\{ x_k \}$ with $x_k\to x$
 such that $A^{m}:V\to B(x_k,1/k)$  is not an homeomorphism for any $V$ with $V\subset B_i$.
 Because of  $A^{m}(B_i)=\T^{2},$ there exists
$y\in B_i$ such that $A^{m}(y)=x$, and there exists $r>0$ such that $A^{m}:B(y, r)\to A^{m}(B(y,r))$
is an homeomorphism. Let $k$ be a large enough positive integer such that
$B(x_k,1/k)\subset A^{m}(B(y,r))$ and $V_k=A^{-m}(B(x_k,1/k))\cap B(y,r).$
Clearly $A^{m}:V_k\to B(x_k,1/k)$ is an homeomorphism and this is a contradiction.

Let $\delta =\min\{\delta_i: \mbox{ for } i=1,...,n\}$ and
$\mathcal{U}_A$ be a $C^{0}$-neighborhood of $A$ such that
$$\mathcal{U}_A=\{g: d(g^m,A^m)<\delta/8 \}.$$

For every $g\in\mathcal{U}_A$ holds that $ g^m(B_i)=\T^{2},$ for $i=1,...,n$. In fact,
given $x\in \T^{2}$,  using the argument above we know that for every $i=1,...,n$
there exists an open set $V_i\subset B_i$ such that $A^{m}:V_i\to B(x,\delta)$
is an homeomorphism.
Since $d(g^m,A^m)<\delta/8,$ it follows that $ g^m(V_i) \supset B(x,\delta /16)$ for every $i=1,...,n$.
Therefore $ g^m(B_i)=\T^{2},$ since $x$ is arbitrarily.  
\end{proof}

The following lemma will be very useful for proving Proposition \ref{prop1}.
Since it is not hard to verify we omit its proof, for further details see \cite{lp}
and \cite{lpv}.

\begin{lema}\label{lemma7}
   Let $g:\T^{2}\to\T^{2}$ be such that the pre-orbit
 $\{w\in g^{-n}(x): n\in \N\}$ is dense in $\T^{2}$ for all $x\in\T^{2}$, then
 $g$ is transitive.
\end{lema}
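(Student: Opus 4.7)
The plan is to deduce transitivity by the standard Birkhoff transitivity criterion: on a compact metric space (hence a second countable Baire space) such as $\T^2$, the existence of a point with dense forward orbit is equivalent to the property that for every pair of non-empty open sets $U, V \subset \T^2$ there is some $n \in \N$ with $g^n(U) \cap V \neq \emptyset$. So the first task is to translate the hypothesis on pre-orbits into this ``open set'' formulation.

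First I would pick any two non-empty open sets $U, V \subset \T^2$, choose a point $x \in V$, and apply the hypothesis to this $x$: the pre-orbit $\{w \in g^{-n}(x) : n \in \N\}$ is dense in $\T^2$, so it meets the open set $U$. Thus there exist $n \in \N$ and $w \in U$ with $g^n(w) = x \in V$, which gives $g^n(U) \cap V \neq \emptyset$, equivalently $U \cap g^{-n}(V) \neq \emptyset$.

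Next I would invoke the Baire category argument. Fix a countable basis $\{V_k\}_{k \in \N}$ of the topology of $\T^2$. For each $k$, the set $W_k := \bigcup_{n \in \N} g^{-n}(V_k)$ is open, and by the previous paragraph it is also dense (it meets every non-empty open set $U$). Since $\T^2$ is a Baire space, the intersection $W := \bigcap_{k \in \N} W_k$ is a dense $G_\delta$, in particular non-empty. Any $z \in W$ has the property that for every $k$ there exists $n$ with $g^n(z) \in V_k$, i.e., the forward orbit of $z$ meets every basic open set and is therefore dense in $\T^2$. Hence $g$ is transitive.

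There is really no serious obstacle here; the whole argument is a direct application of the hypothesis followed by Birkhoff's theorem, which is why the authors omit the details and refer to \cite{lp} and \cite{lpv}. The only point requiring care is the observation that the given hypothesis a priori concerns pre-orbits of individual points, and one must pass to density of $\bigcup_n g^{-n}(V_k)$ for open sets $V_k$, which is immediate from the single-point statement.
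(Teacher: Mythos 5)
Your argument is correct and is exactly the standard Baire-category/Birkhoff argument that the paper omits (it only refers to \cite{lp} and \cite{lpv} for details): pre-orbit density of a point of $V$ gives $U\cap g^{-n}(V)\neq\emptyset$ for all open $U,V$, and intersecting the dense open sets $\bigcup_n g^{-n}(V_k)$ over a countable basis yields a point with dense forward orbit. The only cosmetic point is that you need just the one implication (the open-set condition implies a dense forward orbit on a second countable Baire space, using continuity of $g$ so that $g^{-n}(V_k)$ is open), not the full equivalence you quote, and that implication is precisely what your Baire argument proves.
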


\begin{proof}[{\textbf{Proof of Proposition \ref{prop1}.}}]
For $\varepsilon =r/4$,  let $\mathcal{U}_A$ be a $C^{0}$-neighborhood of $A$ such that Lemma
\ref{lema66} holds. As $f_{\theta , \delta}$ converges to $A$ in the
$C^{0}$-topology  when $\theta$ and $\delta$ goes to zero (see  Remark \ref{rk1}[item (e)]),
we may choose $\theta_0$ and $\delta_0$ such that $f_{\theta_0 , \delta_0}\in \mathcal{U}_A$ and
Lemma \ref{conosinestables} holds. Consider $\mathcal{U}_1$ a $C^{1}$-neighborhood of $f_{\theta_0 , \delta_0}$ with
$\mathcal{U}_1\subset \mathcal{U}_A$ such that Lemmas \ref{lema44} and \ref{lema3}, and item (d) of Remark \ref{rk1} hold.

Let us prove now that for all $g\in \mathcal{U}_1$, $S_g\neq\emptyset$ and $g$ is transitive.
Using Remark \ref{rk1}[item (d)] we get that  $S_g\neq\emptyset$.
By Lemma \ref{lemma7}, it is enough to prove that $\{w\in g^{-n}(x): n\in \N\}$ is dense in $\T^{2}$,
for all $x\in\T^{2}$. Given an open set
$V\subset\T^{2}$, Lemma \ref{lema44} implies there exist $y\in V$ and
$n_{0}\in\N $ such that $g^{n}(y)\in \T^{2}\setminus U'$ for all $n\geq n_0$. By Lemma \ref{lema3}
there exists $n_1$ such that $g^{n}(V)\supset B(g^{n}(y),
r)$ for all $n\geq n_1$ . Let  $B_1,...,B_n$ and $m\in \N$ be 
given by Lemma \ref{lema66} and $n_2=\max\{n_0,n_1\}$. As $diam (B_i)<r/4$, there exists $i_0$ such
that $g^{n_{2}}(V)\supset B_{i_{0}}$. By Lemma \ref{lema66}, item (ii), $g^{m}(B_{i_{0}})=\T^{2}$,
so given $x\in\T^{2}$ then  $x\in g^{m}(B_{i_{0}})$. Hence $x\in g^{n_2+m}(V)$ and therefore
$g^{-(n_2+m)}(x)\cap V\neq\emptyset$. Thus, the pre-orbit $\{w\in g^{-n}(x): n\in \N\}$ is dense in $\T^{2}$ for every point. 
\end{proof}



\subsection{Non-hyperbolic case}\label{nhc}
This section is devoted for the case when one of the eigenvalues is equal one.
Consider a matrix $A$ with spectrum $\sigma (A)=\{\lambda, 1 \}$, $\lambda \in \Z$ and $\lambda>5$.
We may assume that the matrix $A$ is  $\left(\begin{array}{cc}
 \lambda & 0  \\
0 & 1  \\
\end{array}%
\right)$ up to change of coordinates if necessary.
Here we prove our main result for the non-hyperbolic case.

\begin{prop}\label{prop11}
Given a matrix  $A\in \mathcal{M}^{*}_{2\times 2}(\Z)$  with eigenvalues
$|\lambda |>|\mu |=1,$ there exist $f$ homotopic  to $A$ and $\mathcal{U}_f$
$C^{1}$-neighborhood
 of $f$ such that for all $g\in\mathcal{U}_f$ holds that $g$ is transitive, admits a family of unstable cones  and $S_g$ is non-empty.
 \end{prop}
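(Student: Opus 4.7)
The strategy will parallel that of Proposition~\ref{prop1}, but a new obstruction appears. Since the unperturbed matrix $A$ satisfies $\mu=1$, it preserves the horizontal foliation of $\T^2$ and its preimages of any point are confined to a single horizontal line; consequently the density-of-preimages argument packaged in Lemma~\ref{lema66} no longer applies to $A$ in this setting. The perturbation $f$ must therefore do two things simultaneously: produce a persistent critical set, and destroy the invariance of the horizontal foliation strongly enough that long unstable arcs eventually spread in the $y$-direction.

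The plan for the construction of $f$ is to decompose it into two perturbations of $A$. First, on a small ball $U''$ around the origin I would put the local bump of Proposition~\ref{prop1}, namely $\tilde f(x,y) = (\lambda x,\; y - \psi(x)\varphi(y))$, with $\mu$ replaced by $1$ in the formulas. Since the determinant of $D\tilde f$ changes sign inside $U''$ while the determinant of $DA$ equals $\lambda$ outside, the critical set $S_{\tilde f}$ is non-empty and persists in a $C^1$-neighborhood by the determinant-sign argument of Remark~\ref{rk1}(d). Outside $U''$ however $\tilde f = A$ still preserves horizontal lines, so I would add a second $y$-shear of the form $f(x,y) = \tilde f(x,y) + (0,\, h(x))$ with $h$ a small smooth periodic function (for example $h(x) = \alpha \sin(2\pi x)$ for a well-chosen $\alpha$). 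The size $\alpha$ has to be small enough that the argument of Lemma~\ref{conosinestables} still produces a robust unstable cone field with an expansion factor $\lambda' > 1$, yet large enough that iterates of $h$ along the expanding base map $x \mapsto \lambda x$ mix vertically. Homotopy to $A$ and persistence of $S_g$ in a neighborhood $\mathcal{U}_f$ of $f$ follow exactly as in the expanding case.

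The transitivity argument then proceeds as follows. Adapting Lemmas~\ref{lema44} and \ref{lema3}, for any open set $V$ and any $g\in\mathcal{U}_f$ there is a curve $\gamma \subset V$ tangent to the unstable cones whose forward iterates by $g$ grow in length by a factor at least $\lambda'$ per step and eventually lie in $\T^2 \setminus U'$. After sufficiently many iterates this produces a cone-tangent curve that wraps around the $x$-direction of $\T^2$ many times. In place of Lemma~\ref{lema66} I would then prove the following $\varepsilon$-density statement: for every $\varepsilon>0$ there is $m=m(\varepsilon)\in\N$ such that $g^m$ applied to every sufficiently long cone-tangent curve is $\varepsilon$-dense in $\T^2$, robustly in $g$. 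Given this, every pre-orbit of every point of $\T^2$ meets $V$ and Lemma~\ref{lemma7} yields transitivity.

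The main obstacle is precisely this $\varepsilon$-density step. In Proposition~\ref{prop1} it was essentially free because $A$ itself is expanding and one has $A^m(B_i) = \T^2$ for small disks; here $A$ is not expanding and no such direct covering holds. Instead one must establish an equidistribution statement: that the $y$-increments accumulated along orbits, of the form $\sum_{k} h(\lambda^k x)$, cover $\T$ densely as $n$ grows for enough base points $x$. The mixing properties of the expanding base map $x \mapsto \lambda x$ acting on $\T$, combined with the fact that $h$ is a non-constant observable, are what drive this; the hypothesis $\lambda > 5$ provides the extra room needed to keep the unstable cone field invariant while $h$ is still of large enough amplitude to yield the required equidistribution. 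This equidistribution-plus-cone argument is the technical heart of the non-hyperbolic case and is what differentiates it substantively from the expanding one.
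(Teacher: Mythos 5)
Your construction and the paper's agree on the easy parts (the local bump producing a persistent critical set away from the region where the map is modified, homotopy to $A$, and the unstable cone field with expansion $\lambda'>1$), but the core of your argument is an unproven lemma, and it is precisely the step where the real difficulty of the non-hyperbolic case sits. You replace Lemma \ref{lema66} by the claim that, robustly in $g$, the image under $g^m$ of every sufficiently long cone-tangent curve is $\varepsilon$-dense in $\T^2$, and you propose to derive it from equidistribution of the vertical increments $\sum_k h(\lambda^k x)$ for the shear $h(x)=\alpha\sin(2\pi x)$. Two problems. First, even for the unperturbed skew product $(x,y)\mapsto(\lambda x,\,y+h(x))$ this vertical spreading is a genuine cohomological condition on the cocycle $h$ over the expanding base (it fails, for instance, whenever $h(x)=u(\lambda x)-u(x)+c$ with $u$ continuous), so ``$h$ is a non-constant observable'' is not an argument; nothing in your sketch verifies it, and the appeal to $\lambda>5$ giving ``extra room'' has no actual content (in the paper that hypothesis is only used to fit two disjoint strips in a fundamental domain, with the case $\lambda\le 5$ handled by passing to $f^3$). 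Second, and more seriously for robustness: in your $f$ the fiber dynamics outside the bump is an isometry (rotations), so there is no hyperbolicity whatsoever in the vertical direction. Once you pass to an arbitrary $g$ that is only $C^1$-close, the skew-product structure on which your Birkhoff-sum argument rests is destroyed, the vertical behavior is an almost-isometry with errors that accumulate over the unboundedly many iterates your density statement needs, and the cone field only controls the horizontal direction. You have no mechanism that makes the $\varepsilon$-density statement open in $g$, so the proposal does not establish robust transitivity.

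The paper's route is different exactly to avoid this: following Example 3.6 of \cite{hg}, it modifies $A$ on two vertical strips $A_0,A_1$ using Morse--Smale circle diffeomorphisms $g,h$ in the fibers, thereby creating saddle fixed points $p_0,q_0$ and repelling fixed points $p_1,q_1$ (Remark \ref{rk3}); it then proves that $W^u(p_0)$ and $W^s(p_0)$ are dense (Lemmas \ref{lema8} and \ref{lema9}) using only covering relations $f_0(A_i)\supset[-\frac12-\varepsilon,\frac12+\varepsilon]\times S^1$ and uniform fiber contraction rates on the relevant intervals, all of which are $C^1$-open conditions, so transitivity persists for every $g$ in a neighborhood; finally the critical bump is inserted in a ball where $f_0=A$, away from $A_0\cup A_1$, so it does not interfere with these arguments. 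In short, the paper manufactures vertical hyperbolic fixed points whose invariant manifolds are robustly dense, whereas your construction has none, and without either that mechanism or a proof of your robust equidistribution lemma the argument has a genuine gap.
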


\subsubsection{\bf{Sketch of the Proof}}
Let $A$ as in Proposition \ref{prop11}. Inspired by Example 3.6 of \cite{hg},
we construct a function $f$ perturbing $A$ in such a way that appear two 
saddle fix points, two repelling fix points (Remark \ref{rk3}) and critical points.
We show that the stable and unstable manifold of one of the saddle point
are dense (Lemma \ref{lema8} and Lemma \ref{lema9}). Hence, $f$ is transitive,
non-hyperbolic, admits non-empty critical set and exhibits a family
of unstable cones. Finally, by construction follows that the latter properties are robust 
proving the main theorem.

\subsubsection{\bf{Construction of $f$}}
From now on we consider $\T^{2}=\R^{2}/[-1,1]^{2}$.
We begin constructing a function $f_0$ without critical points that will be
very helpful in the final construction of the function $f$ satisfying  Proposition \ref{prop11}.

Let  $h,g:S^{1}\to S^{1}$ be diffeomorphisms as in Figure \ref{figura6} (a)
with the following additional conditions:

\begin{enumerate}\label{enumerate1}
\item $x_0=0$ and $y_0$ are attracting fixed points for $g$ and $h$ respectively and
there exists $\lambda\in (0,1)$ such that  $|g'(x)|\leq \lambda $ for
$x\in [0,g^{-1}(h(0))]$ and $|h'(x)|\leq \lambda $ for $x\in [0,y_0]$ and
\item $x_1$ and $y_1$ are repelling fixed points for $g$ and $h$ respectively,
$|g'(x)|\leq \lambda/2 $ and $|h'(x)|\leq \lambda/2 $  for $x\in S^{1}$.
\end{enumerate}

\begin{figure}[ht]
\psfrag{a0}{\tiny{$A_0$}}\psfrag{a1}{\tiny{$A_1$}}
\psfrag{g}{\tiny{$g$}}
\psfrag{h}{\tiny{$h$}}\psfrag{p0}{\tiny{$x_0$}}\psfrag{p1}{\tiny{$x_1$}}
\psfrag{q0}{\tiny{$y_0$}}\psfrag{q1}{\tiny{$y_1$}}
\psfrag{h0}{\tiny{$h(0)$}}
\psfrag{p00}{\tiny{$p_0$}}
\psfrag{p11}{\tiny{$p_1$}}
\psfrag{q00}{\tiny{$q_0$}}
\psfrag{q11}{\tiny{$q_1$}}
\psfrag{gh}{\tiny{$g^{-1}(h(0))$}}
\begin{center}
\subfigure[]{\includegraphics[scale=0.15]{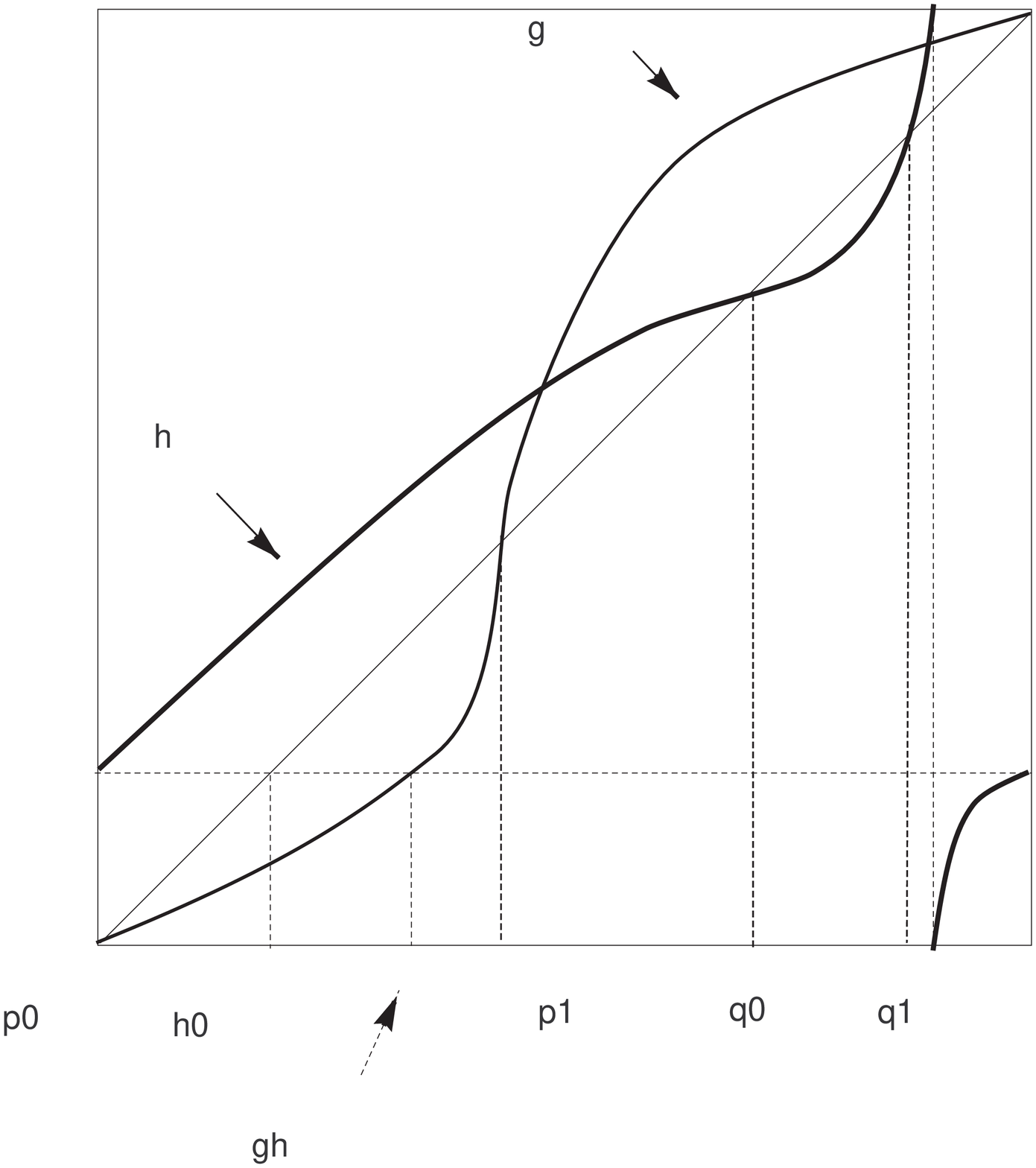}}
\subfigure[]{\includegraphics[scale=0.2]{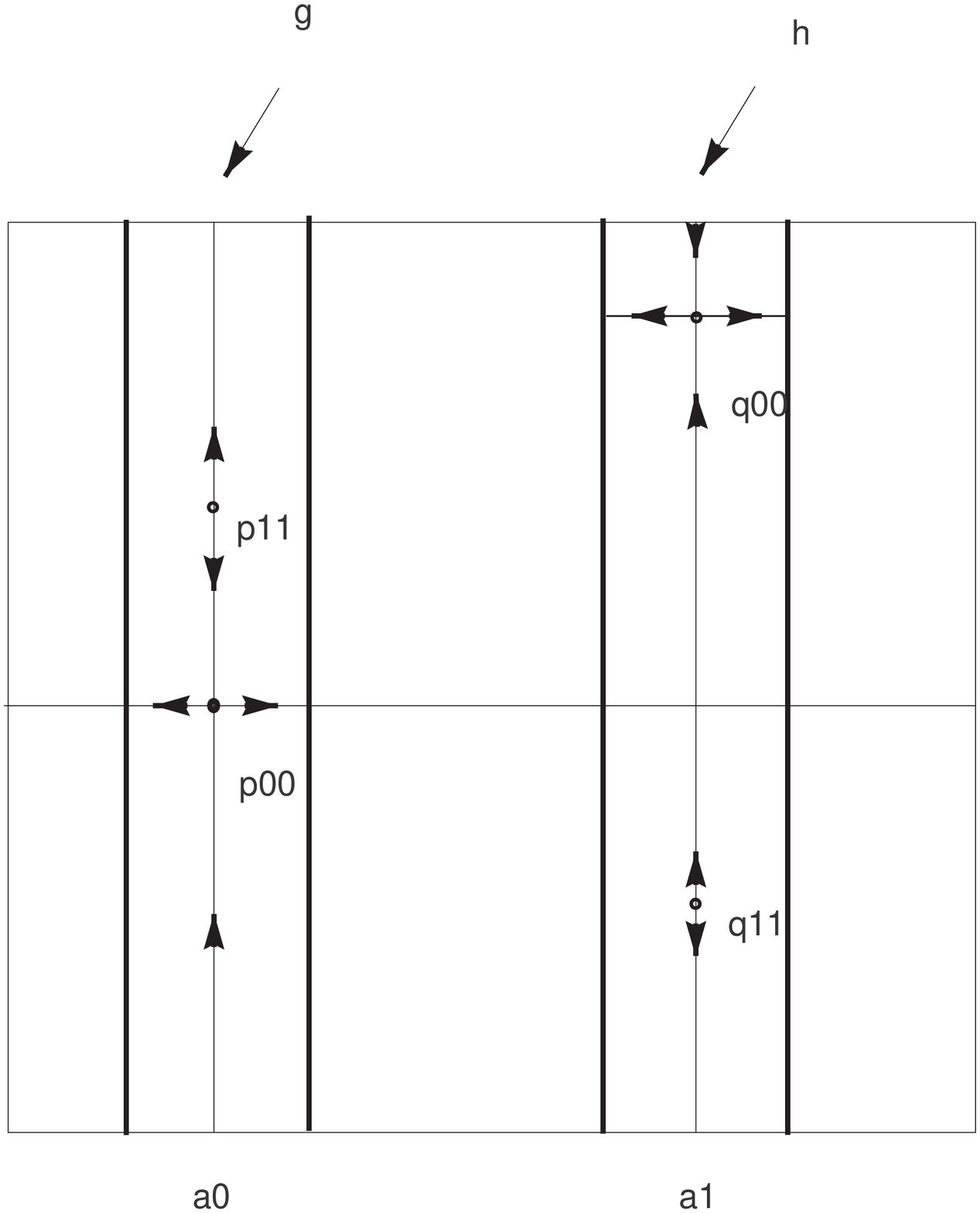}}
\caption{\label{figura6}}
\end{center}
\end{figure}


Consider  $\varepsilon =\frac{1}{100\lambda}$, $A_0=[-\frac{1}{2\lambda}-\varepsilon , \frac{1}{2\lambda}+\varepsilon] \times S^{1}$,
$A_1=[\frac{3}{2\lambda}-\varepsilon, \frac{5}{2\lambda}+\varepsilon]\times S^{1}$ and $f_0:A_0\cup A_1\to\T^{2}$ such that

$$f_{0}(x,y) =\left\{
\begin{array}{l}
(\lambda x,g(y)), \mbox{ if } (x,y)\in A_0,\\
{}\\
(\lambda x-2,h( y)), \mbox{ if } (x,y)\in A_1.
\end{array}
\right.$$

As $h$ and $g$ can be constructed $C^{1}$-close to the identity
as necessary, $f_0$ can be extended to $\T^{2}$ in such a way that in
the complement of a neighborhood of $A_0\cup A_1$, $f_0$ coincides with the matrix
$A$ and  $f_0$ has strong dominated expansion in the first coordinate. So there exists a family
$\mathcal{C}^u_a$ of unstable cones, where every curve  $\gamma$
such that $\gamma'(t)\subset \mathcal{C}^u_a(\gamma(t))$, $diam( f_0(\gamma))\geq \lambda'diam(\gamma)$ where $1<\lambda'<\lambda$.


\begin{rk}\label{rk3}
Some highlights about the dynamics of $f_0$.
\begin{enumerate}
\item Since $\lambda\!>\!5,$ it follows that $A_0\!\subset\! [-\frac{1}{2}\!-\varepsilon,\frac{1}{2}\!+\varepsilon]\!\times S^{1}$ and
 $A_1\!\subset\! [-\frac{1}{2}\!-\varepsilon,\frac{1}{2}\!+\varepsilon]\!\times S^{1}$.

 \item $f_0(A_0)\supset [-\frac{1}{2}-\varepsilon,\frac{1}{2}+\varepsilon]\times S^{1}$ and
 $f_0(A_1)\supset [-\frac{1}{2}-\varepsilon,\frac{1}{2}+\varepsilon]\times S^{1}$.

\item $f_0$ has two saddle fix points $p_0=(0,0)\in A_0$ and
$q_0=(\frac{2}{\lambda-1},y_0)\in A_1$ and two repelling fix points
$p_1=(0,x_1) \in A_0$ and $q_1=(\frac{2}{\lambda-1},y_1)\in A_1$.

\item If we denote by $W_{loc}^{u}(p_0)$ the set
$\{(s,0): -\frac{1}{2\lambda}-\varepsilon <s< \frac{1}{2\lambda}+\varepsilon \}$, then
$f_0(W_{loc}^{u}(p_0))\supset \{ (s',0):  -\frac{1}{2}-\varepsilon<s'< \frac{1}{2}+\varepsilon   \}.$

\item  $f(\{(s,0): -\frac{3}{2\lambda}-\varepsilon <s< \frac{5}{2\lambda}+\varepsilon
\})\supset \{ (s',h(0)):   -\frac{1}{2}-\varepsilon<s'< \frac{1}{2}+\varepsilon   \}.$

\item $f_0$ is homotopic to $A$.

\end{enumerate}
\end{rk}

\begin{lema}\label{lema8}
The unstable manifold of the point $p_0$ is dense in $\T^{2}$.
\end{lema}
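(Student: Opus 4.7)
The plan is to show that $\overline{W^u(p_0)}$ contains the horizontal circle $S^1\times\{y\}$ for a dense set of values $y\in S^1$, which will immediately give $\overline{W^u(p_0)}=\T^2$.

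The first step is to establish $S^1\times\{0\}\subset\overline{W^u(p_0)}$. By Remark \ref{rk3}(4), $f_0(W^u_{loc}(p_0))$ already contains a horizontal segment at $y=0$ of $x$-length greater than one. Outside the strips $A_0\cup A_1$ the map $f_0$ coincides with $A$, which fixes the $y$-coordinate and expands the $x$-coordinate by $\lambda>5$. Iterating, each piece of the segment that lies outside $A_0\cup A_1$ is stretched by $\lambda$ while remaining at $y=0$; since $|A_0|,|A_1|$ have $x$-length of order $1/\lambda$, after a few iterates the total $x$-length of the images at height zero exceeds the circumference $2$ of the circle, so indeed $S^1\times\{0\}\subset\overline{W^u(p_0)}$.

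Next I would set
\[
E:=\{y\in S^1:\; S^1\times\{y\}\subset\overline{W^u(p_0)}\},
\]
which is closed, contains $0$, and I claim is invariant under both $g$ and $h$. Indeed, if $y\in E$ then $f_0(S^1\times\{y\})\subset\overline{W^u(p_0)}$, and using $f_0(x,y)=(\lambda x,g(y))$ on $A_0$ and $f_0(x,y)=(\lambda x-2,h(y))$ on $A_1$, the image contains horizontal segments of $x$-length of order one at the heights $g(y)$ and $h(y)$. Applying at those new heights the same horizontal-filling argument used at $y=0$ (exploiting that $f_0=A$ outside $A_0\cup A_1$ preserves $y$ while expanding $x$ by $\lambda$) produces the full circles $S^1\times\{g(y)\}$ and $S^1\times\{h(y)\}$ inside $\overline{W^u(p_0)}$, so $g(y),h(y)\in E$.

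The final step is to conclude $E=S^1$. Since $g$ and $h$ are orientation-preserving circle diffeomorphisms with distinct attracting fixed points ($x_0=0$ and $y_0$) and distinct repelling fixed points ($x_1$ and $y_1$), no finite set and no proper arc of $S^1$ can be simultaneously $g$- and $h$-invariant, so the only closed $\{g,h\}$-forward-invariant subset of $S^1$ containing $0$ is $S^1$ itself. The main obstacle lies precisely in this minimality statement for the IFS $\{g,h\}$ on $S^1$: one must exploit the explicit graphs of $g$ and $h$ from Figure \ref{figura6}(a) together with the topological positions of the four fixed points to rule out proper closed invariant subsets. Once this minimality is established, $E=S^1$, every horizontal circle lies in $\overline{W^u(p_0)}$, and therefore $W^u(p_0)$ is dense in $\T^2$.
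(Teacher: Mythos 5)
Your strategy has two genuine gaps, one of which is the heart of the matter. First, the "horizontal-filling" step is not justified by the argument you give. The circle $S^1\times\{0\}$ is not invariant: at each iterate the pieces of your height-zero segments lying over the strip $A_1$ are sent to height $h(0)$ and are lost, so the height-zero part of $f_0^n(W^u_{loc}(p_0))$ is a union of intervals that is repeatedly cut. The observation that the \emph{total} $x$-length of these pieces exceeds the circumference does not imply density (the pieces can overlap and cluster); to conclude $S^1\times\{0\}\subset\overline{W^u(p_0)}$ you would need an actual gap-control argument using the expansion by $\lambda$ against the smallness of the removed strip, and the same issue recurs (with both $A_0$ and $A_1$ acting as holes) every time you invoke the filling argument at a new height $g(y)$ or $h(y)$ in your invariance step for $E$.

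Second, and more seriously, the final step --- minimality of the closed $\{g,h\}$-forward-invariant set containing $0$ --- is exactly the crux, and you leave it unproved; moreover the reasoning you sketch is a non sequitur. A closed forward-invariant set need not be a finite set or an arc: it can be a Cantor set or the complement of an arc. Indeed, for two North--South circle diffeomorphisms, if some open arc $I$ contains both repelling points $x_1,y_1$ and neither attracting point, then $I\subset g(I)\cap h(I)$, so $S^1\setminus I$ is a proper closed set, forward-invariant under both maps, containing $x_0=0$ and $y_0$. Hence minimality is not a formal consequence of "two attractors, two repellers"; it depends on the cyclic position of $x_0,x_1,y_0,y_1$ and of $h(0)$, i.e.\ on precisely the quantitative data the construction fixes. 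The paper's proof avoids the IFS-minimality route altogether: assuming an open set $V\subset[-\tfrac12-\varepsilon,\tfrac12+\varepsilon]\times[0,h(0)]$ misses $W^u(p_0)$, it pulls back a \emph{vertical} segment $s\subset V$ choosing at each step the preimage inside $A_0$ or $A_1$ so that all preimages stay in the region where $|g'|,|h'|\le\lambda<1$; the backward images of $s$ then grow like $\lambda^{-n}$ and must eventually meet $W^u(p_0)$, contradicting invariance of $W^u(p_0)$ under forward iteration. Density is then propagated first horizontally by the expansion in the first coordinate, then to $[-1,1]\times[0,y_0]$ using that $y_0$ attracts for $h$, and finally to all of $\T^2$ using that $x_0$ attracts for $g$. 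If you want to salvage your approach you must both supply the filling argument and prove the minimality statement from the explicit configuration of Figure 2(a); as written, the proof is incomplete at its decisive point.
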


\begin{proof}

Let us prove first that the unstable manifold of  $p_0$ is dense in
$[ -\frac{1}{2}-\varepsilon , \frac{1}{2}+\varepsilon ]\times [0,h(0)]$.  If it is not dense
then there exists an open set  $V\subset [ -\frac{1}{2}-\varepsilon, \frac{1}{2} +\varepsilon]\times [0,h(0)]$
such that  $V\cap W^{u}(p_0)=\emptyset$. Let  $s$ be a vertical segment contained in $V$.\\

\noindent {\bf{Claim}}: There exists a sequence of vertical segments  $\{s_n\}_{n\leq 0}$ such that
$f_0(s_{n-1})=s_n$, $s_0=s$ and
$s_n \subset (A_0\cup A_1)\cap [ -\frac{1}{2}-\varepsilon, \frac{1}{2} +\varepsilon]\times [0,g^{-1}(h(0))]$  for $n<0$.  

\vspace{1mm}
\noindent
\textit{Proof of the Claim}.
The idea of the proof is simple, taking as many pre-images  of the segment $s$ in $A_0$ or in $A_1$
as necessary to obtain the sequence $\{s_n\}_{n\leq 0}.$

Since $f_0(A_0)\supset [-\frac{1}{2}-\varepsilon,\frac{1}{2}+\varepsilon]\times S^{1},$ there
exists a vertical segment $s_{-1}\subset [-\frac{1}{2\lambda}-\varepsilon,\frac{1}{2\lambda}+\varepsilon]\times [0,g^{-1}(h(0))]\cap A_0$
such that  $f_0(s_{-1})=s$. Note that $s_{-1}\subset [-\frac{1}{2\lambda}-\varepsilon ,\frac{1}{2\lambda}+\varepsilon ]\times [0,h(0))$
or $s_{-1}\subset [-\frac{1}{2\lambda}-\varepsilon ,\frac{1}{2\lambda}+\varepsilon ]\times (h(0),g^{-1}(h(0)))$.
    If the first inclusion is verified, we take   $s_{-2}\subset A_0$ such that $f_0(s_{-2})=s_{-1}$.
If the second inclusion is verified, as  $f_0(A_1)\supset [-\frac{1}{2}-\varepsilon,\frac{1}{2}+\varepsilon]\times S^{1}$,
we take $s_{-2}\subset A_1$ such that $f_0(s_{-2})=s_{-1}$. In the case where
$s_{-2}\subset [-\frac{1}{2\lambda}-\varepsilon ,\frac{1}{2\lambda}+\varepsilon ]\times [0,h(0))$
we proceed as in the case where $s_{-1}\subset [-\frac{1}{2\lambda}-\varepsilon ,\frac{1}{2\lambda}+\varepsilon ]\times [0,h(0))$
and we define $s_{-3}$ as the pre-image by $f_0$ of $s_{-2}$ that is included in $A_0$.

In the case where we have either $s_{-2}\subset [-\frac{1}{2\lambda}-\varepsilon ,\frac{1}{2\lambda}+\varepsilon ]\times (h(0),g^{-1}(h(0)))$
or $s_{-2}\subset A_1$, define
$s_{-3},..., s_{-n_{0}}$ as the successive pre-images  of $s_{-2}$ that are in
$A_1$ until $s_{-n_{0}}\subset [\frac{3}{2\lambda}-\varepsilon ,\frac{5}{2\lambda}+\varepsilon ]\times [0,h(0)) $.
Then we take $s_{{-(n_{0}+1)}}$ as the pre-image of $s_{-n_{0}}$ that is in $A_0$.
Proceeding inductively we obtain the sequence $\{s_n\}$.$\B$

Since $s_n \subset [ -\frac{1}{2}-\varepsilon, \frac{1}{2} +\varepsilon]\times [0,g^{-1}(h(0))]\cap (A_0\cup A_1)$,
by item (1) in the definition of $h$ and $g$ we obtain that $diam (s_n)\geq \lambda ^{-n}diam (s)$.
Then there exists $n_0$ such that $s_{n_{0}}\cap W^{u}(p_0)\neq\emptyset$, 
giving a contradiction. This shows that  $W^{u}(p_0)$ is dense in
$[ -\frac{1}{2}-\varepsilon , \frac{1}{2}+\varepsilon ]\times [0,h(0)]$.

By the expansion in the first coordinate, we conclude that  $W^{u}(p_0)$ is dense in $[-1,1]\times [0,h(0)]$.
Also, as $y_0$ is an attracting point for $h$ we have that $\cup f_0^{n}([-1,1]\times [0,h(0)])$ is dense in
$[-1,1]\times [0,y_0]$. Then  $W^{u}(p_0)$ is dense in $[-1,1]\times [0,y_0]$.

  Analogously, as  $x_0$ is an attracting point for $g$ we have that $\cup f_0^{n}([-1,1]\times [0,y_0])$ is
  dense in $\T^{2}$, so we are done.
\end{proof}

For next lemma we mean by stable manifold of $p_0$  the union
$\bigcup_{n\geq 0} f^{-n}(W^s_{loc}(p_0)),$ where the local stable
manifold $W^s_{loc}(p_0)=\{(0,s):\; s\in [-1,1]\setminus\{x_1\}\}.$

\begin{lema}\label{lema9}
 The stable manifold for the point $p_0$ is dense in $\T^{2}$.
\end{lema}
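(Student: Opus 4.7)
\emph{Proof plan.} My strategy is to compute the pre-images $f_0^{-n}(W^s_{loc}(p_0))$ directly, by exploiting that the first-coordinate dynamics of $f_0$ reduces to the linear expanding circle map $T_\lambda:x\mapsto \lambda x\bmod 2$. This reduction follows from the piecewise definition of $f_0$ together with Remark~\ref{rk3}(1): on $A_0$ the first coordinate of $f_0$ is $\lambda x$; on $A_1$ it is $\lambda x-2$, which equals $\lambda x\bmod 2$ since $\lambda x\in[3/2-1/100,\,5/2+1/100]$; and on the complement $f_0$ agrees with $A$. I would further exploit that the natural extension of $f_0$ to $\T^2$ is fibered over the $x$-coordinate: on each vertical fiber $\{x\}\times S^1$, $f_0$ acts in the second coordinate by a circle diffeomorphism $\phi_x$ equal to $g$, $h$ or $\mathrm{id}$ according to the region (and an interpolation in the transition region).

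With this fibered structure one has
$$f_0^n(x,y)=\bigl(T_\lambda^n(x),\ \Phi_n^{(x)}(y)\bigr), \qquad \Phi_n^{(x)}:=\phi_{T_\lambda^{n-1}(x)}\circ\cdots\circ\phi_{T_\lambda(x)}\circ\phi_x,$$
and $\Phi_n^{(x)}$ is itself a diffeomorphism of $S^1$. Hence $(x_0,y_0)\in f_0^{-n}(W^s_{loc}(p_0))$ if and only if $T_\lambda^n(x_0)=0$ and $\Phi_n^{(x_0)}(y_0)\neq x_1$. The first condition singles out the finite set $X_n:=T_\lambda^{-n}(\{0\})=\{2k/\lambda^n:k\in\Z\}\cap[-1,1)$, while the second excludes only the single value $y_0=(\Phi_n^{(x_0)})^{-1}(x_1)$. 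Therefore, for every $x_0\in X_n$ the pre-image contains the vertical arc $\{x_0\}\times\bigl(S^1\setminus\{(\Phi_n^{(x_0)})^{-1}(x_1)\}\bigr)$, which is dense in $\{x_0\}\times S^1$. Since $\bigcup_n X_n$ is dense in $[-1,1)$ (as $\lambda>5$), it follows that $W^s(p_0)=\bigcup_{n\geq 0}f_0^{-n}(W^s_{loc}(p_0))$ contains a dense family of almost complete vertical arcs in $\T^2$, hence is dense.

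The step I would scrutinize most carefully is the fibered structure in the transition region where the formulas on $A_0\cup A_1$ are glued to the formula on the complement: the argument above requires that the first coordinate of $f_0$ depend only on $x$ and that in each fiber the second coordinate is a global diffeomorphism of $S^1$. If such an extension is not available, I would fall back on an argument dual to the proof of Lemma~\ref{lema8}: take any open $V$, choose an unstable arc $\gamma\subset V$, iterate forward using Lemma~\ref{conosinestables}(iii) until the horizontal extent of $f_0^n(\gamma)$ exceeds the horizontal circumference of $\T^2$, so that $f_0^n(\gamma)$ wraps around and crosses $\{0\}\times S^1$ transversally many times; since the forward $y$-dynamics attracts orbits to $\{0,y_0\}$, which are disjoint from the repeller $x_1$, at least one such crossing lies in $W^s_{loc}(p_0)$, yielding a point of $\gamma\cap W^s(p_0)\subset V\cap W^s(p_0)$.
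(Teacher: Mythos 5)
Your main route is genuinely different from the paper's. The paper never computes preimages of $W^s_{loc}(p_0)$: it takes an arbitrary open set, an unstable-cone curve $\gamma$ inside it, and pushes it forward until it crosses the strip $A_0$, hence the circle $\{0\}\times S^1$; the entire second half of its proof is devoted to the only delicate point, namely that the crossing might be exactly the repelling point $p_1=(0,x_1)$, which it handles by noting $p_1$ is fixed (so the curve keeps passing through it), producing first an intersection with $W^s_{loc}(q_0)$ and from there one with $W^s_{loc}(p_0)$. Your fiberwise preimage computation is cleaner where it applies, but it applies only if the extension of $f_0$ across the gluing collars of $A_0\cup A_1$ is chosen fiber-preserving, with first coordinate exactly $\lambda x \bmod 2$ and each fiber map a circle diffeomorphism; the paper only demands that the extension coincide with $A$ off a neighborhood of $A_0\cup A_1$ and have dominated expansion in the first coordinate. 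Since $g,h$ are $C^1$-close to the identity such a fibered gluing can indeed be arranged, and you rightly flag this as the crux, so your primary argument is acceptable as the construction of a particular admissible $f_0$. Note, however, what the paper's route buys and yours does not: in the proof of Proposition \ref{prop11} the authors reuse ``the proof given in Lemma \ref{lema9}'' verbatim for the perturbed map $f$, which has critical points on some vertical fibers; there the fiber maps are no longer injective, so the count ``fiber minus one point'' collapses, whereas the forward-iteration argument uses only the unstable cone field and survives both the perturbation and $C^1$-small changes of the map. (Also, the cone property you invoke for $f_0$ is the one stated in the construction of this subsection, not Lemma \ref{conosinestables}, which concerns the expanding case; minor.)

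Your fallback is essentially the paper's argument, but the step you dispatch with ``since the forward $y$-dynamics attracts orbits to $\{0,y_0\}$, at least one such crossing lies in $W^s_{loc}(p_0)$'' is precisely the point that requires proof, and as written it is a gap. The crossings of $f_0^n(\gamma)$ with $\{0\}\times S^1$ occur at finitely many specific heights determined by the itineraries of points of $\gamma$, and the attraction statement by itself does not exclude that every one of them equals $x_1$: for instance $g$ fixes $x_1$ and $f_0=A$ outside $A_0\cup A_1$ and its collars, so any branch whose past itinerary has avoided $A_1$ and the gluing region while staying at height $x_1$ crosses $x=0$ exactly at $p_1$; to conclude you must exhibit a crossing coming from a branch whose height has been moved off $x_1$, which is exactly what the paper's detour through the second saddle supplies (if $p_1\in f_0^{n_0}(\gamma)$, iterate further to meet $W^s_{loc}(q_0)$ and only then $W^s_{loc}(p_0)$). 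So: the primary route stands once you fix the fibered gluing, but the fallback needs this degenerate case argued, either as the paper does or by an explicit analysis of the crossing heights.
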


\begin{proof}
Let $V\subset\T^{2}$ be an open set and let $\gamma$ be a curve contained in $V$ such that
$\gamma'(t)\subset \mathcal{C}^u_a(\gamma(t))$. By the expansion in the first coordinate of
$f_0$ there exists $n_0$ such that  $f_0^{n_{0}}(\gamma)$ 
 goes
through $A_0,$ it means that $f_0^{n_{0}}(\gamma)$ intersect both sides of $A_0$.
If $p_1\notin f_0^{n_{0}}(\gamma)$ then
$f_0^{n_{0}}(\gamma)\cap W^{s}_{loc}(p_0)\neq\emptyset$ and we are done.
 If $p_1\in f_0^{n_{0}}(\gamma)$ then there exists $n_1>n_0$  large enough such that
 $f_0^{n_{1}}(\gamma)\cap W^{s}_{loc}(q_0)\neq\emptyset$, so there exists $n_2>n_1$ such
 that $f_0^{n_{2}}(\gamma)\cap W^{s}_{loc}(p_0)\neq\emptyset$.
\end{proof}

Note that if the stable manifold and unstable manifold of $p_0$ are dense in $\T^{2}$,
by a standard proceeding we conclude that given open sets $U$ and $V$ in $\T^{2}$ there exists
$n_0\in\N$ such that  $f_0^{n_{0}}(U)\cap V\neq\emptyset$, therefore $f_0$ is transitive.

\begin{rk}
  Note that  $f_0$ is transitive and it has a saddle fix point and a repelling fix point.
  Thus $f_0$ is not hyperbolic.
\end{rk}

\begin{proof}[{\textbf{Proof of Proposition \ref{prop11}}}.]
Using $f_0$ we  construct the function $f$ that satisfies our main result. 
As $f_0$ coincides with  $A$ in the complement of a neighborhood of $A_0\cup A_1$,
we pick a point  $z_0\in\T^{2}\setminus A_0\cup A_1$ and $r>0$ such that
$f_{0}{_{|_{B(z_0,r)}}}=A_{|_{B(z_0,r)}}$. Let $z_0=(x_0,y_0)$ and consider as in the expanding case (section \ref{expC})
$\psi :\R\to\R$ such that $\psi$ is $C^{\infty}$,  $x=x_0$  the unique critical point,
$\psi (x_0)=1+\theta$ and $\psi (x)=0$ for $x$ in the complement of the intervals
$(x_0-\theta ,x_0+\theta  )$. Let $\varphi:\R\to\R$ be such that $\varphi (y)=0$ for
$y\notin [y_0,y_0+\delta ]$ and $\varphi' $ as in the expanding case but with
$\varphi'(y_0+\delta /2 )=1 $ and $\delta <2\theta <r$.  Then let us define $f:\T^{2}\to \T^{2}$  by
$$f(x,y) =\left\{
\begin{array}{l}
(\lambda x, y-\psi (x)\varphi (y)), \mbox{ if } (x,y)\in B((x_0,y_0),r)\\
{}\\
f_0(x,y), \mbox{ if } (x,y)\in B^{c}((x_0,y_0),r).
\end{array}
\right.$$
Note that the critical points of $f$ are  persistent (see Remark \ref{rk1}(d)).
The unstable manifold of the point $p_0$ is dense in $\T^{2}$, because the proof of
Lemma \ref{lema8} holds for $f$ as well.   Also note that  Lemma  \ref{conosinestables} holds for $f$,
it means that $f$ exhibits a family of unstable cones.
Therefore the proof given in Lemma \ref{lema9} also holds for  $f$. In consequence,  $f$ is transitive.
Finally we highlight that the last properties are robust. Thus Proposition \ref{prop11} follows.
\end{proof}


%
%
%
%
%
%

\begin{rk}
For $\lambda\!\leq\! 5$  define  $A_0\!=\![-\frac{1}{2\lambda^{3}}\!-\varepsilon , \frac{1}{2\lambda^{3}}\!+\varepsilon]\! \times\! S^{1}$ and
$A_1\!=\![\frac{3}{2\lambda^{3}}-\varepsilon, \frac{5}{2\lambda^{3}}+\varepsilon]\!\times\! S^{1}$,  take $h $
and $g $ such that $h^3$ and $g^3 $ are as in Figure \ref{figura6}.
Consider $f$ as in the previous case. Therefore $ f^ 3$ verifies  Proposition \ref{prop11},
so $f$ also verifies the same proposition.
\end{rk}

\subsection{Saddle case} \label{sc}
 In this section we study the saddle case, that is 
 $A$ is a $2$ by $2$ matrix such that
 $\sigma (A)=\{\lambda , \mu \}$, $|\lambda |>1>|\mu |$ and $|det(A)|\geq 2$.
Note that in this case $\lambda$ and $\mu$ are irrational numbers.
Up to change of coordinates, we may assume that  $A$ is $\left(\begin{array}{cc}
  \lambda & 0  \\
  0 & \mu  \\
\end{array}%
\right)$. Take $w=(1,0)$ as the irrational direction associated to the expanding eigenvalue
 and $w_1=(0,1)$ associated to the contracting eigenvalue.

We start this section with a lemma intrinsic to the manifold  $\T^{2}$ (Lemma \ref{conos}).
This lemma is very helpful in order to prove Theorem \ref{principal} in the saddle case.
Let us introduce first some useful notation.
We say that $w\in\R^{2}$, $w\neq 0$, is an irrational direction if
$\{\lambda w :\lambda \in\R \}\cap (\Z^{2}\setminus (0,0))=\emptyset     $ where
$\Z^{2}=\{(z_1,z_2)\in \R^{2}: \ \ z_1,z_2\in\Z \}$. For each $w,$ let $w^{\bot}\in \R^{2}$
be such that  $\{w,w^{\bot} \}$ is an orthogonal basis of
$\R^{2}$. For each  $a>0$ and $p\in\T^{2}$ we consider
$$\mathcal{C}_{a,w}^u(p)  =\left\{v=v_1w+v_2w^{\bot}\in T_p\T^{2} : \ \ \left| \frac{v_2}{v_1}    \right | <a    \right\}.  $$
Let $w_1\in\R^{2}$ be such that  $\{w,w_1 \}$ are linearly independent. Consider again for each  $a>0$ and $p\in\T^{2}$
 $$\mathcal{C}_{a,w_{1}}^s(p)  =\left\{v=v_1w_1+v_2w_1^{\bot}\in T_p\T^{2} :  \left| \frac{v_2}{v_1}    \right | <a    \right\}.  $$
Fix  $a_0>0$ small enough such that  $\mathcal{C}_{a_{0},w}^u(p) \cap  \mathcal{C}_{a_{0},w_{1}}^s(p)=\emptyset .$
Finally, given a curve $\gamma :[\alpha ,\beta ]\to \T^{2}$ define
as a length of $\gamma$ by $\ell(\gamma )=\int_{\alpha}^{\beta}| \dot{\gamma (t)}|dt$.

\begin{lema}\label{conos}
  Let  $a_0, w,w_1$,  $\mathcal{C}_{a_{0},w}^u(p)$  and $\mathcal{C}_{a_{0},w_{1}}^s(p)$ as above.
  Given $\varepsilon >0$ there exist $a_1\in (0,a_0)$ and $M>0$ such that if $\gamma $ is a curve with
  $\gamma^{'}(t)\subset \mathcal{C}^u_{a_{1},w}(\gamma(t))$  and $\ell(\gamma )\geq M$ and $\beta$
  is a curve with $\beta^{'}(t)\subset \mathcal{C}^s_{a_{1},w_{1}}(\beta(t))$ and $\ell(\beta )\geq \varepsilon$,
  then $\gamma \cap\beta\neq \emptyset .$
\end{lema}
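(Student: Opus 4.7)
The strategy combines a Kronecker density statement for the irrational direction $w$ with a Jordan-curve transversality argument in a small Euclidean disk around a point of $\beta$. For $a_1$ small and $M$ large, any $\gamma$ satisfying the hypotheses stays $C^{0}$-close to a long straight segment in direction $w$, so by Kronecker it is very dense in $\T^{2}$; once some point of $\gamma$ lands close to the midpoint of $\beta$, the transversality of the two cone families will force the curves to cross.

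Fix $\eta:=\varepsilon/100$. Since $w$ is an irrational direction, the linear flow $\phi_t(p)=p+tw$ on $\T^{2}$ is minimal, so by compactness there exists $T>0$ such that for every $p\in\T^{2}$ the segment $\{p+tw:t\in[0,T]\}$ is $\eta$-dense in $\T^{2}$. Set $M:=2T$, and pick $a_1\in(0,a_0)$ so small that (i) every pair of non-zero vectors $u\in\mathcal{C}^{u}_{a_1,w}(p)$, $v\in\mathcal{C}^{s}_{a_1,w_1}(p)$ spans an angle bounded below by some $\theta_0>0$ uniform in $p$, and (ii) $a_1 M<\eta$. Parametrise $\gamma$ by arc length $s\in[0,M]$ (restricting to an initial subarc if $\ell(\gamma)>M$), let $\tilde\gamma\subset\R^{2}$ be a lift, and decompose $\tilde\gamma(s)=\tilde\gamma(0)+u(s)\hat w+v(s)\hat w^{\perp}$ in the orthonormal basis adapted to $w$. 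The cone condition gives $u'(s)\geq 1/\sqrt{1+a_1^{2}}$ and $|v(s)|\leq a_1 s\leq a_1 M<\eta$, hence $u(M)\geq T$. So $\Sigma:=\{\tilde\gamma(0)+t\hat w:t\in[0,u(M)]\}$ projects to an $\eta$-dense subset of $\T^{2}$, and since $\gamma$ lies within distance $\eta$ of the projection of $\Sigma$, the curve $\gamma$ itself is $2\eta$-dense in $\T^{2}$.

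Now let $q$ be the midpoint of $\beta$ in arc length; density yields some $s^{*}$ with $d_{\T^{2}}(\gamma(s^{*}),q)<2\eta\ll\varepsilon$. Consider the Euclidean ball $B:=B(q,\varepsilon/4)$. Each half of $\beta$ has arc length $\varepsilon/2$ and tangent within angle $\arctan a_1$ of $\pm w_1$, so its endpoint lies at distance $\geq(\varepsilon/2)\cos(\arctan a_1)>\varepsilon/4$ from $q$ and its chord from $q$ is almost parallel to $w_1$; the two halves of $\beta$ therefore leave $B$ through the $+w_1$- and $-w_1$-arc of $\partial B$ respectively. Similarly, the connected arc of $\gamma\cap B$ containing $\gamma(s^{*})$ has monotone $\hat w$-coordinate at rate $\geq 1/\sqrt{2}$ and $\hat w^{\perp}$-drift at most $a_1(\varepsilon/2)\ll\varepsilon/4$, so it exits $B$ through the $+w$- and $-w$-arc of $\partial B$. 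Because $w,w_1$ are linearly independent and the cones are narrow, the four exit points on the circle $\partial B$ alternate between $w$- and $w_1$-arcs, and the Jordan curve theorem then forces the two arcs to intersect inside $B$, producing $\gamma\cap\beta\neq\emptyset$.

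The most delicate point is this last step: verifying, in a possibly non-orthogonal $(w,w_1)$-configuration, that the relevant arcs of $\gamma$ and $\beta$ genuinely traverse $B$ through opposite pairs of sides of $\partial B$. Defeating the cumulative deviations $a_1M$ (coming from Kronecker) and $a_1\varepsilon$ (coming from the local intersection analysis) simultaneously is precisely why $a_1$ and $M$ must be chosen jointly in terms of $\varepsilon$, which explains the order of quantifiers in the lemma.
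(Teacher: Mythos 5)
The paper offers no argument to compare against: Lemma \ref{conos} is explicitly left as an exercise (``Since the proof is simple we leave it as an exercise for the reader''), so your write-up is, in effect, supplying the missing proof. Your route is the natural one and is essentially correct: uniform $\eta$-density of length-$T$ segments of the irrational linear flow (minimality plus compactness), the observation that a curve tangent to the narrow cone $\mathcal{C}^u_{a_1,w}$ shadows a straight segment in direction $w$ up to the cumulative drift $a_1 M$, and a local crossing argument in a small disk around the midpoint of $\beta$ in which the $\gamma$-arc and the $\beta$-arc are cross-cuts through nearly transversal directions, forcing an intersection by a Jordan-curve/separation argument. This is almost surely the intended ``exercise''.

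Two points need patching, both minor and both fixable without changing the structure. First, your constants $\eta=\varepsilon/100$ and radius $\varepsilon/4$ silently assume $w$ and $w_1$ are close to orthogonal: if the fixed angle $\theta_0$ between the (closed) cones $\mathcal{C}^u_{a_0,w}$ and $\mathcal{C}^s_{a_0,w_1}$ is small, a $\gamma$-chord passing within $2\eta=\varepsilon/50$ of $q$ can meet the line of $\beta$ only at distance about $2\eta/\sin\theta_0$ from $q$, which may exceed $\varepsilon/4$, so the crossing need not occur inside $B(q,\varepsilon/4)$. Since $w,w_1,a_0$ are fixed data of the lemma, this is cured by taking $\eta$ of order $\varepsilon\sin\theta_0$ (you flag the issue in your last paragraph but never implement the $\theta_0$-dependence); in the paper's application $w=(1,0)$, $w_1=(0,1)$, so there your constants are fine. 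Second, the point $\gamma(s^{*})$ produced by density could lie within distance $\varepsilon$ of an endpoint of $\gamma$, in which case the component of $\gamma\cap B$ through it need not exit $B$ on both sides. Fix this by running the density argument on the middle subsegment of $\Sigma$ (e.g.\ take $M=2T+2\varepsilon$ and use the parameters with $u(s)\in[\varepsilon,T+\varepsilon]$), which guarantees $s^{*}\in[\varepsilon,M-\varepsilon]$ and hence a genuine cross-cut. With these adjustments the argument is complete.
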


\begin{proof}
 Since the proof is simple we leave it as an exercise for the reader.
\end{proof}

\begin{rk}
  Note that if  $a$ is such that $0<a<a_1$ as $\mathcal{C}_{a_{},w}^u(p) \subset   \mathcal{C}_{a_1,w}^u(p) $
   and $\mathcal{C}_{a_{},w_{1}}^s(p) \subset \mathcal{C}_{a_1,w_{1}}^s(p) $, the thesis of Lemma \ref{conos} holds for $a$.
\end{rk}

%
%

Let us keep in mind the main result we will prove in this section.

\begin{prop}\label{prop111}
Given a matrix  $A\in \mathcal{M}^{*}_{2\times 2}(\Z)$  with eigenvalues
$|\lambda |>1>|\mu |,$ there exist $f$ homotopic  to $A$ and $\mathcal{U}_f$
$C^{1}$-neighborhood $\mathcal{U}_f$ of $f$ such that for 
all $g\!\in\!\mathcal{U}_f$ holds that $g$ is transitive, admits a family of unstable cones  and $S_g$ is non-empty.
  \end{prop}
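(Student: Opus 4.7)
The plan is to follow the overall structure of the proof of Proposition \ref{prop1}, but to replace the forward ball-covering mechanism of Lemmas \ref{lema3}--\ref{lema66} by Lemma \ref{conos}. The reason the earlier argument cannot be reused directly is that, since $|\mu|<1$, iterates $g^{n}(V)$ of an open set $V$ no longer swallow a ball of fixed radius; they contract in the stable direction and stretch in the unstable one. The natural replacement for ``containing a ball'' is ``containing an unstable arc of large length'', and Lemma \ref{conos} is precisely the tool that turns long unstable arcs into intersection with arbitrary stable arcs.

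First I would construct $f$ by mimicking Subsection \ref{expC}: pick $U''=B(0,r/8)$ centered at the origin, bump functions $\psi,\varphi$ supported in $U''$ with amplitudes chosen so that $\mu-\psi(x)\varphi'(y)=0$ admits solutions, and set
\[
f(x,y)=(\lambda x,\ \mu y-\psi(x)\varphi(y)).
\]
Outside $U''$ the map $f$ coincides with $A$, so $f$ is homotopic to $A$, and $S_f=\{(x,y):\mu-\psi(x)\varphi'(y)=0\}\subset U''$ is a non-empty union of curves. Since the Jacobian of $f$ is lower triangular with $\lambda\neq 0$, at every $p\in S_f$ the kernel of $Df_p$ is the vertical direction $(0,1)$, hence transverse to $w=(1,0)$. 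The proof of Lemma \ref{conosinestables} uses only the estimate $|\lambda|>|\mu|$ and this transversality, both of which hold here; consequently $f$ admits an invariant horizontal unstable cone field $\mathcal{C}^{u}_{a_{0},w}$ with expansion factor $\lambda'>1$. The analog of Lemma \ref{clly1} and item (d) of Remark \ref{rk1} then supply a $C^{1}$-neighborhood $\mathcal{U}_{f}$ such that every $g\in\mathcal{U}_{f}$ still has $S_g\neq\emptyset$, still admits $\mathcal{C}^{u}_{a_{0},w}$ as an invariant cone field with a uniform expansion rate $\lambda''>1$, and keeps $\ker Dg$ transverse to $\mathcal{C}^{u}_{a_{0},w}$ along $S_g$.

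Fix $g\in\mathcal{U}_{f}$ and open sets $U,V\subset\T^{2}$; I will show $g^{n}(V)\cap U\neq\emptyset$ for some $n$, which yields transitivity. Choose an arc $\gamma\subset V$ tangent to $\mathcal{C}^{u}_{a_{0},w}$ with $\ell(\gamma)>0$ and, shrinking $U$ if necessary, an arc $\beta\subset U$ tangent to $\mathcal{C}^{s}_{a_{0},w_{1}}$ with $w_{1}=(0,1)$ and $\ell(\beta)\geq\varepsilon$ for some $\varepsilon>0$. Because $\ker Dg$ is transverse to $\mathcal{C}^{u}_{a_{0},w}$, the pointwise estimate $|Dg(v)|\geq\lambda''|v|$ for $v\in\mathcal{C}^{u}_{a_{0},w}$ holds even above the critical set, so integrating along the curve gives $\ell(g^{n}(\gamma))\geq(\lambda'')^{n}\ell(\gamma)\to\infty$. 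Let $M=M(\varepsilon)$ be the constant furnished by Lemma \ref{conos}; for $n$ large enough $\ell(g^{n}(\gamma))\geq M$, and Lemma \ref{conos} yields $g^{n}(\gamma)\cap\beta\neq\emptyset$, hence $g^{n}(V)\cap U\neq\emptyset$. The remaining conclusions of Proposition \ref{prop111} are already built into the construction of $\mathcal{U}_{f}$.

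The delicate step is the length estimate when $\gamma$, or one of its iterates, passes through the critical set: there $g$ fails to be a local diffeomorphism and $g(\gamma)$ may develop a cusp-like degeneracy. Transversality of $\ker Dg$ to the unstable cone rescues the argument, because $|Dg(v)|\geq\lambda''|v|$ is a pointwise inequality valid for every $v$ in the cone independently of whether its base point is critical, so the integral over $\gamma$ still delivers the required lower bound. Making sure this transversality, and therefore the uniform expansion rate $\lambda''$, persists throughout $\mathcal{U}_{f}$ requires the same type of $C^{1}$-openness argument as in Lemma \ref{clly1} and is the main technical point to verify. Once this is in place, Lemma \ref{conos} -- which is a purely geometric statement about $\T^{2}$ with two fixed transverse directions -- delivers transitivity uniformly on $\mathcal{U}_{f}$, completing the proof.
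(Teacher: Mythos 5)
Your overall scheme (forward-expand an unstable arc, then force an intersection via Lemma \ref{conos}) is the right skeleton, but there is a genuine gap in how you invoke Lemma \ref{conos}. In that lemma the constants are produced in the order ``given $\varepsilon>0$ there exist $a_1\in(0,a_0)$ and $M>0$'': both the admissible cone aperture $a_1$ and the length $M$ depend on $\varepsilon$, and $a_1$ genuinely has to shrink as $\varepsilon$ does (a long curve tangent to a cone of \emph{fixed} aperture around $w$ need not meet a short transversal arc; inside a fixed cone there are rational directions, so the curve can run along a closed geodesic and miss a small stable segment no matter how long it is). In your argument $\varepsilon$ is the length of a stable arc $\beta$ chosen inside the arbitrary target open set $U$, hence $\varepsilon=\varepsilon(U)$ is only known after $U$ is given; but the invariant unstable cone aperture $a_0$ for all $g\in\mathcal{U}_f$ was fixed when $f$ and $\mathcal{U}_f$ were constructed. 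You therefore cannot guarantee that $g^{n}(\gamma)$ is tangent to the narrower cone $\mathcal{C}^u_{a_1(\varepsilon),w}$ that Lemma \ref{conos} requires, and the cone-invariance estimates do not let you shrink the aperture of $Dg^n$-images below a fixed positive threshold (the perturbation creating the critical set contributes a fixed additive term to the slope). Also, ``shrinking $U$ if necessary'' goes the wrong way: shrinking $U$ only makes the available stable arcs shorter.

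The paper resolves exactly this point by fixing $\varepsilon=r$ (a constant of the construction) \emph{before} choosing $a_0$, $\delta_0$, $\theta_0$, and then, for an arbitrary target open set $V_2$, manufacturing a stable curve of length $>r$ not inside $V_2$ itself but inside a backward iterate of it: one takes a tiny stable arc through a preimage $q$ of a point of $V_2$ and pulls it back along an inverse branch that stays in $\T^2\setminus B((x_0,y_0),3r)$ (Lemma \ref{preorbita}), where the stable cones for local inverses (Lemma \ref{conos-estables}) expand its length by a factor $\mu_1^{-1}>1$ at each step until it exceeds $r$. This is also why the construction of $f$ is not centered at an arbitrary ball as in your sketch: the perturbation ball is placed at a point $(x_0,y_0)$ admitting a twin $(x_1,y_1)$ with $A(x_1,y_1)=A(x_0,y_0)$ and disjoint from the relevant balls, precisely so that every point has an inverse branch avoiding the perturbed region and the stable-cone estimates apply along it. Without this backward-expansion mechanism (or some substitute that produces, robustly, a stable arc of the \emph{fixed} length $r$ attached to any open set), your application of Lemma \ref{conos} does not close, so the proof as written is incomplete; your forward unstable-expansion estimates, including the observation that expansion in the cone holds across the critical set, are fine and coincide with the paper's Lemmas \ref{conos-inestablesII} and \ref{clly12}.
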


\subsubsection{\bf{Sketch of the Proof}}
Let $A$ as in Proposition \ref{prop111}. We perturb $A$ in a similar way as we did for
the expanding case obtaining a map such that admits unstable and stable cone fields (Lemma \ref{conos-inestablesII}
and Lemma \ref{conos-estables}) in a robust way (Lemma \ref{clly12} and Lemma \ref{clly1}). 
Given $U$ and $V$ two open sets, pick $\gamma$ a curve in $U$ that belong to the unstable cone and
$\beta$ in $V$ belonging to the stable cone, iterate forward $\gamma$ and iterate backward $\beta$
until reaching certain fix diameter, the these curves intersect (Lemma \ref{conos} and Lemma \ref{preorbita}).
Since all these process are robust we show that the perturbation is robust transitive, admits unstable cones field and
exhibits non-empty persistent critical set.

\subsubsection{\bf{Construction of $f$}}
The function $f$ that we  construct for proving our main result is inspired in  Example  2.9 in \cite{br}.
Choose $(x_0,y_0), (x_1,y_1)\in\T^{2}$  such that $(x_1,y_1)\neq (x_0,y_0),$ $A(x_0,y_0)\neq (x_0,y_0)$ and
  $A(x_0,y_0)=A (x_1,y_1)$. Fix $r>0$  such that
 \begin{itemize}
\item $\overline{A(B((x_0,y_0),3r))}\cap \overline{B((x_0,y_0),3r)}=\emptyset$ and
\item $\overline{B((x_1,y_1),3r)}\cap \overline{B((x_0,y_0),3r)}=\emptyset$.
\end{itemize}

Consider  as in the expanding case, section \ref{expC},
$\psi :\R\to\R$ such that $\psi$ is $C^{\infty}$, $x_0$ as the unique critical point,
$\psi (x_0)=1+\theta$ and $\psi (x)=0$ for $x$ in the complement of the intervals
$(x_0-\theta ,x_0+\theta  )$. Let $\varphi:\R\to\R$ be such that $\varphi (y)=0$ for
$y\notin [y_0,y_0+\delta ]$ and $\varphi' $ as in the expanding case but with
$\varphi'(y_0+\delta /2 )=1 $ and $\delta <2\theta <r$.  Then define $f:\T^{2}\to \T^{2}$  by
$$f(x,y)=f_{\delta,\theta}(x,y)=(\lambda x,\mu y-\psi (x)\varphi (y)).$$
 We denote $f_{\delta,\theta}$ by $f$ by abuse of notation.

  \begin{rk}\label{rk44}
   Note that:
   \begin{enumerate}
    \item $f|_{\T^{2}\setminus B((x_0,y_0),r)} =A|_{\T^{2}\setminus B((x_0,y_0),r)},$
    \item There exists a $C^{1}$-neighborhood $\mathcal{U}_f$ of $f$ such that
          $S_g\neq\emptyset$  for all $g\in \mathcal{U}_f$ (see Remark \ref{rk1}[item (d)]) and
    \item $f$ is homotopic to $A$.
\end{enumerate}
\end{rk}

The next lemma shows that $f$ as we defined above exhibits a family
of unstable cones.

\begin{lema}[Existence of unstable cones for $f$]\label{conos-inestablesII}
Given $\theta >0$, $a>0$, $\delta >0$  and $\lambda^{'}$ with $1< \lambda^{'}<|\lambda|$,
there exist $a_0>0$, and $\delta_0 >0$ with $0<a_0<a$ and $0<\delta_0 <\delta$ such that if
$f=f_{\theta ,\delta_{0} }$   then the following properties hold:
\begin{enumerate}
  \item[(i)] If $\mathcal{C}_{a_{0}}^u(p)=\{(v_1,v_2) : \  |v_2|/|v_1| <a_0   \}$ then $\overline{Df_p(\mathcal{C}_{a_{0}}^u(p))}\setminus \{(0,0)\}\subset \mathcal{C}_{{a_{0}}}^u(f(p))$, for all  $p\in \T^{2}$,
  \item[(ii)] if $v\in \mathcal{C}_{a_{0}}^u(p)$ then $|Df_p(v)|\geq \lambda'|v|$ and
\item[(iii)] if $\gamma$ is a curve such that $\gamma^{'}(t)\subset \mathcal{C}^u_{a_{0}}(\gamma(t))$ then $diam(f(\gamma))\geq \lambda^{'} diam (\gamma )$.
\end{enumerate}
\end{lema}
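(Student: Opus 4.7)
The plan is to mimic the proof of Lemma \ref{conosinestables} almost verbatim, since the formula for $f=f_{\theta,\delta}$ and the resulting differential
$$Df_{(x,y)}=\begin{pmatrix}\lambda & 0 \\ -\psi'(x)\varphi(y) & \mu-\psi(x)\varphi'(y)\end{pmatrix}$$
are identical to those in the expanding case; only the spectral regime $|\mu|<1<|\lambda|$ is new.

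For $(i)$, I would take $v=(v_1,v_2)\in \mathcal{C}^u_{a_0}(p)$, write
$$Df_p(v)=(\lambda v_1,\,-\psi'(x)\varphi(y)v_1+(\mu-\psi(x)\varphi'(y))v_2),$$
and bound the slope of the image by
$$\frac{M\delta}{|\lambda|}+\frac{|\mu-\psi(x)\varphi'(y)|}{|\lambda|}\,a_0,$$
with $M=\max|\psi'|$ and $\max|\varphi|\leq\delta$, exactly as in Lemma \ref{conosinestables}. The one step that requires a new input compared with the expanding case is checking that $|\mu-\psi(x)\varphi'(y)|/|\lambda|$ is uniformly less than $1$: here the numerator can be as large as $|\mu|+(1+\theta)\max|\varphi'|$, which is of order one, so mere smallness of $\delta$ does not suffice. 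I would therefore invoke the determinant hypothesis $|\lambda\mu|\geq 2$, which together with $|\mu|<1$ forces $|\lambda|>2$, and this is precisely the room needed to dominate the perturbation. This is the main (minor) obstacle in transferring the expanding-case argument to the saddle setting. Once this is secured, choosing $a_0<a$ and then $\delta_0<\delta$ small enough brings the slope bound strictly below $a_0$, establishing cone invariance.

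For $(ii)$, the same computation as before gives
$$\frac{|Df_p(v)|^2}{(\lambda')^2|v|^2}=\frac{\lambda^2+\bigl(-\psi'(x)\varphi(y)+(\mu-\psi(x)\varphi'(y))(v_2/v_1)\bigr)^2}{(\lambda')^2\bigl(1+(v_2/v_1)^2\bigr)},$$
and since the extra terms in both numerator and denominator are controlled by $\delta_0$ and $a_0$, shrinking these parameters drives the ratio arbitrarily close to $\lambda^2/(\lambda')^2>1$, yielding $|Df_p(v)|\geq\lambda'|v|$. Finally, $(iii)$ is obtained by integrating the pointwise bound from $(ii)$ along the curve $\gamma$, using the cone invariance from $(i)$ to ensure that the tangent vectors $\dot\gamma(t)$ stay in $\mathcal{C}^u_{a_0}$ under iteration, so that the length (and hence the diameter) expands by at least the factor $\lambda'$.
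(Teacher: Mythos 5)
Your proposal is correct and follows essentially the same route as the paper, which omits this proof precisely because it is the computation of Lemma \ref{conosinestables} repeated; your observation that the new point is the term $|\mu-\psi(x)\varphi'(y)|$, now of order one rather than bounded by $|\mu|$, so that domination must come from the size of $|\lambda|$ and not from shrinking $\delta$, is exactly the right thing to single out. Only note that the clean way to secure this is $|\lambda|=|\det A|/|\mu|\ge 2/|\mu|>1+|\mu|$ for $|\mu|<1$ (together with $\theta$ small, consistent with the paper's construction where $\delta<2\theta<r$ and $\theta_0<\theta$ is later shrunk), since the bare inequality $|\lambda|>2$ by itself need not dominate $|\mu|+(1+\theta)\max|\varphi'|$ when $|\mu|$ is close to $1$.
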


\begin{proof}
We omit the proof, since it is similar to the one given for Lemma \ref{conosinestables}.
\end{proof}

The properties given in Lemma \ref{conos-inestablesII} are open, thus it follows the next
result.

\begin{lema}\label{clly12}
For each $f$ in the hypotheses of previous lemma there exists a  $C^{1}$-
neighborhood $\mathcal{U}_f$ of $f$ such that for every $g\in
\mathcal{U}_f$  the properties $(i)$, $(ii)$ and $(iii)$ of Lemma \ref{conos-inestablesII} hold.
\end{lema}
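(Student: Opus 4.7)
The plan is to show that each of the three conditions $(i)$, $(ii)$, $(iii)$ of Lemma \ref{conos-inestablesII} is an open condition in the $C^{1}$-topology, relying on the fact that in the construction of $f=f_{\theta,\delta_0}$ the parameters $a_0$ and $\delta_0$ were chosen with strict inequality, leaving margin for perturbation. The main input will be that if $g$ is $C^{1}$-close to $f$, then by compactness of $\T^{2}$ there exists $\eta>0$ (as small as we wish by shrinking the neighborhood) with $\|Dg(p)-Df(p)\|<\eta$ for every $p\in\T^{2}$.

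For condition $(i)$, I would rerun the computation in the proof of Lemma \ref{conosinestables}: writing a vector $v=(v_1,v_2)\in\mathcal{C}_{a_0}^{u}(p)$ and expanding $Dg_p(v)=Df_p(v)+(Dg_p-Df_p)(v)$, the quotient $|(Dg_p v)_2|/|(Dg_p v)_1|$ is bounded above by the analogous quotient for $Df$ plus a term of order $\eta$. Since in the proof of Lemma \ref{conos-inestablesII} the constants $\delta_0$ and $a_0$ were chosen so that $\frac{M\delta_0}{|\lambda|}+\frac{|\mu|a_0}{|\lambda|}<a_0$ strictly, choosing $\eta$ sufficiently small still yields a quantity strictly less than $a_0$, so $\overline{Dg_p(\mathcal{C}_{a_0}^{u}(p))}\setminus\{(0,0)\}\subset \mathcal{C}_{a_0}^{u}(g(p))$ for every $p$.

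For condition $(ii)$, the same proof of Lemma \ref{conos-inestablesII} actually produces the strict inequality $\left(|Df_p(v)|/(\lambda'|v|)\right)^{2}>1$ with a definite margin that is uniform in $p$ and $v\in \mathcal{C}_{a_0}^{u}(p)$. Since $v\mapsto |Dg_p(v)|$ is a perturbation of $v\mapsto |Df_p(v)|$ of order $\eta\|v\|$, by shrinking the neighborhood we preserve the strict inequality $|Dg_p(v)|\geq \lambda'|v|$ for every $p$ and every $v\in \mathcal{C}_{a_0}^{u}(p)$. Finally, condition $(iii)$ is a direct consequence of $(i)$ and $(ii)$ via integration along the curve $\gamma$, using that $(Dg\circ\gamma)(t)$ remains inside the cone and is expanded by $\lambda'$, which is the same argument as for $f$ once the pointwise properties are established.

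The only potential subtlety is to verify the uniformity in $p$, but this is automatic from the compactness of $\T^{2}$ and the fact that the $C^{1}$-distance controls $\|Dg-Df\|$ globally; since the statement of Lemma \ref{conos-inestablesII} already includes uniform margins, there is no obstacle beyond tracking these margins explicitly.
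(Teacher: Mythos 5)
Your argument is correct and follows exactly the route the paper intends: the paper simply asserts that properties $(i)$--$(iii)$ of Lemma \ref{conos-inestablesII} are open (leaving the verification as an exercise, as for Lemma \ref{clly1}), and your proof supplies precisely that verification, using the uniform strict margins in the choice of $a_0,\delta_0$ together with compactness of $\T^{2}$ and $C^{1}$-control of $\|Dg-Df\|$. Nothing is missing; note only that the cones here are constant in the coordinates used, so the passage from $\mathcal{C}^u_{a_0}(f(p))$ to $\mathcal{C}^u_{a_0}(g(p))$ in $(i)$ is indeed immaterial, as your argument implicitly uses.
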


Let $f=f_{\delta , \theta}$ be as in the hypotheses of the previous lemmas.
Let $p,q\in \T^{2}\setminus B((x_0,y_0),r)$ be such that $f(p)=q$.
Let $V$ be a neighborhood of $p$ such that $f|_{V}:V\to f(V)$ is a
diffeomorphism and $\phi:f(V)\to V$ is a local inverse of $f$, that is $\phi\circ f=id_{|_{V}}$.
As $f_{|_{\T^{2}\setminus B((x_0,y_0),r)}}$ coincide with the matrix $A$ we have the following.

\begin{lema}[Existence of stable cones for $f$]\label{conos-estables}
   Given $a>0$, $\mu_1\in (|\mu|,1),$ there exists
     $a_0\in (0,a)$ such that for every $p,q\in \T^{2}\setminus B((x_0,y_0),r)$
     with $f(p)=q$ and $\phi$ a local inverse holds the following properties:
   \begin{enumerate}
   \item[(i)] If $\mathcal{C}_{a_0}^s(q)=\{(v_1,v_2) : \  |v_1|/|v_2| <a_0 \},$
             then $\overline{D\phi_q(\mathcal{C}_{a_0}^s(q))}\subset \mathcal{C}_{a_0}^s(p)$;
   \item[(ii)] If $v\in \mathcal{C}_{a_0}^s(q),$ then $|D\phi_q(v)|\geq \mu_1 ^{-1}|v|;$ and
   \item[(iii)] If $\gamma$ is a curve such that $\gamma^{'}(t)\subset \mathcal{C}^s_{a_0}(\gamma(t)),$
             then $diam(\phi(\gamma))\geq \mu_1^{-1} diam (\gamma )$.
\end{enumerate}
\end{lema}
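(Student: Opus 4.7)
The plan is to reduce everything to the linear model $A^{-1}$, using the fact that $f\equiv A$ outside $B((x_0,y_0),r)$. Since $p,q\in\T^{2}\setminus B((x_0,y_0),r)$ and the local inverse $\phi$ is defined in a small neighborhood (which we may shrink to stay in $\T^2\setminus B((x_0,y_0),r)$), we have $Df_p = A$ and hence
\[
D\phi_q \;=\; (Df_p)^{-1} \;=\; A^{-1} \;=\; \begin{pmatrix}\lambda^{-1} & 0 \\ 0 & \mu^{-1}\end{pmatrix}.
\]
So the cone analysis is a purely linear computation governed by the ratio $|\mu|/|\lambda|<1$ and the expansion factor $|\mu^{-1}|>1$.

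For (i), take $v=(v_1,v_2)\in\mathcal{C}_{a_0}^s(q)$. Then $D\phi_q(v)=(\lambda^{-1}v_1,\mu^{-1}v_2)$, and
\[
\frac{|\lambda^{-1}v_1|}{|\mu^{-1}v_2|} \;=\; \frac{|\mu|}{|\lambda|}\cdot\frac{|v_1|}{|v_2|} \;<\; \frac{|\mu|}{|\lambda|}\,a_0 \;<\; a_0,
\]
which gives the strict containment $\overline{D\phi_q(\mathcal{C}_{a_0}^s(q))}\subset \mathcal{C}_{a_0}^s(p)$ for any $a_0<a$. For (ii), compute
\[
|D\phi_q(v)|^2 \;=\; \lambda^{-2}v_1^2+\mu^{-2}v_2^2 \;\geq\; \mu^{-2}v_2^2,
\]
and use the cone condition $v_1^2\leq a_0^2 v_2^2$ to bound $|v|^2=v_1^2+v_2^2\leq (1+a_0^2)v_2^2$, so that
\[
\frac{|D\phi_q(v)|^2}{|v|^2} \;\geq\; \frac{\mu^{-2}}{1+a_0^2}.
\]
Since $\mu_1>|\mu|$, we have $\mu_1^{-2}<\mu^{-2}$, so choosing $a_0>0$ small enough that $1+a_0^2\leq \mu_1^2/\mu^2$ yields $|D\phi_q(v)|\geq \mu_1^{-1}|v|$. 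This determines $a_0$.

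Part (iii) follows from (i) and (ii) in the same way that Lemma \ref{conosinestables}(iii) follows from its first two items. Namely, if $\gamma$ is a curve tangent to $\mathcal{C}^s_{a_0}$ in the domain of $\phi$, then by (i) the curve $\phi\circ\gamma$ is also tangent to $\mathcal{C}^s_{a_0}$; since for narrow cones the diameter is comparable to the arc length, integrating the pointwise estimate from (ii) gives $\mathrm{diam}(\phi(\gamma))\geq \mu_1^{-1}\mathrm{diam}(\gamma)$. The only real subtlety is ensuring $\phi$ is defined along all of $\gamma$, which is harmless here because we may replace $\gamma$ by a subcurve inside $f(V)$; the estimates are local and propagate along the curve. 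I expect no significant obstacle, and in the spirit of the analogous Lemma \ref{conosinestables} I would present the proof very briefly.
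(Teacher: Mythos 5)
Your proposal is correct and follows exactly the route the paper intends: the paper offers no written proof of Lemma \ref{conos-estables}, justifying it only by the preceding remark that $f$ coincides with $A$ on $\T^{2}\setminus B((x_0,y_0),r)$, so that $D\phi_q=A^{-1}$ and everything reduces to the linear estimates you carry out (with the choice $1+a_0^{2}\leq \mu_1^{2}/\mu^{2}$ giving item (ii), and (iii) following as in Lemma \ref{conosinestables}). Your explicit computation is a faithful, slightly more detailed version of the same argument.
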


\begin{rk}
Note that for every $a'\in (0,a_0)$ the properties (i), (ii) and (iii) of Lemma \ref{conos-estables} hold.
\end{rk}

Once again using that properties in Lemma \ref{conos-estables} are open,
we have the following.

\begin{lema}\label{clly1}
For each  $f$ in the hypotheses of the previous lemma, there exists a $C^{1}$-
neighborhood $\mathcal{U}_f$ of $f$ such that for every $g\in
\mathcal{U}_f$  the properties  $(i),$  $(ii)$ and $(iii)$ of Lemma \ref{conos-estables} hold.
\end{lema}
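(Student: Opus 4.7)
The plan is to argue that each of the three properties $(i)$, $(ii)$, $(iii)$ in Lemma \ref{conos-estables} is defined by a strict (non-strict after passing to closures) inequality on the derivative of the local inverse, and that strict inequalities persist under small $C^1$ perturbations. The key technical point will be to set up local inverses for a $C^1$-close $g$ in a uniform way over the compact set $\T^2\setminus B((x_0,y_0),r)$, so that I can quantify closeness of $D\phi_g$ to $D\phi_f$ independently of the base point.

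First, I would use that $f$ coincides with the linear automorphism $A$ on $\T^2\setminus B((x_0,y_0),r)$ (Remark \ref{rk44}(1)), so $Df$ is invertible there with a uniform lower bound on $|\det Df|$. By a standard quantitative inverse function theorem, there exists $\rho>0$ such that for every $p\in\T^2\setminus B((x_0,y_0),r)$, the restriction $f|_{B(p,\rho)}$ is a diffeomorphism onto its image and admits a $C^1$ inverse $\phi^f_p$. For $g$ in a small enough $C^1$-neighborhood $\mathcal{U}_f$ of $f$, the same quantitative argument, applied to $g$, shows that $g|_{B(p,\rho/2)}$ is a diffeomorphism onto its image with local inverse $\phi^g_p$, and moreover $\phi^g_p\to\phi^f_p$ uniformly in $C^1$ on $\overline{B(q,\rho/4)}$ as $g\to f$ in $C^1$, uniformly in $p$.

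Next, I would exploit compactness. The function $p\mapsto D\phi^f_{q}$ (where $q=f(p)$) is continuous on $\T^2\setminus B((x_0,y_0),r)$, which is compact, and the cone/expansion conditions $(i)$, $(ii)$, $(iii)$ of Lemma \ref{conos-estables} hold with some strict margin: the image cone is strictly contained in $\mathcal{C}^s_{a_0}(p)$, vectors are expanded by a factor strictly greater than some $\mu_1^{-1}>1$, and curves are stretched by $\mu_1^{-1}$. I would pick $a_0'\in(0,a_0)$ and $\mu_1'\in(\mu_1,1)$ and verify that $f$ satisfies the three properties with these strictly better constants. Then, by the uniform $C^1$-closeness of $D\phi^g$ to $D\phi^f$ from the previous step, each inequality degrades by at most a small amount, so $g$ still satisfies $(i)$, $(ii)$, $(iii)$ with the original constants $a_0$, $\mu_1$.

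The main obstacle I anticipate is the bookkeeping around local inverses: the property is stated not for a globally defined inverse but for any choice of local inverse at a pair $(p,q)$ with $g(p)=q$. I would handle this by making the cone-field statements pointwise — for every $q\in\T^2\setminus B((x_0,y_0),r)$ and every preimage $p\in g^{-1}(q)\setminus B((x_0,y_0),r)$, the branch $\phi^g_p$ is $C^1$-close to the corresponding branch $\phi^f_p$, and the pointwise cone/expansion inequalities hold with the required constants. The expansion statement $(iii)$ for curves then follows from $(ii)$ by the usual integration $\ell(\phi(\gamma))\ge\mu_1^{-1}\ell(\gamma)$ together with $(i)$ to keep tangent vectors inside the cone along the curve.
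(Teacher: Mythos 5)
Your proposal follows essentially the same route the paper takes: the paper offers no written proof here, simply asserting (as with the unstable-cone version, Lemma \ref{clly1} in the expanding case) that properties $(i)$--$(iii)$ are open, i.e.\ strict inequalities involving only $(Dg_p)^{-1}$ over the compact set $\T^{2}\setminus B((x_0,y_0),r)$, where $Df=A$, and your compactness-plus-uniform-margin argument with a quantitative inverse function theorem is exactly the intended elaboration. One slip to fix: the auxiliary constant should be chosen as $\mu_1'\in(|\mu|,\mu_1)$, not $\mu_1'\in(\mu_1,1)$ --- you need $(\mu_1')^{-1}>\mu_1^{-1}$ so that $f$ satisfies $(ii)$--$(iii)$ with genuine slack and the small degradation under perturbation still lands at the original constant $\mu_1$; as written, the interval is backwards and the degradation step for the expansion estimates would not close. (Also note, as in the paper's own treatment, that passing from the vector estimate $(ii)$ to the diameter estimate $(iii)$ uses that curves tangent to a narrow cone have length comparable to diameter.)
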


Given $p\in \T^{2}$, we say that $\{p_n\}_{n\in\Z^{-}}$ is an inverse branch of $p$ if $f(p_{n})=p_{n+1}$
with $n\in\Z^{-}$ and $p_0=p$.

\begin{lema}\label{preorbita}
 There exists $\mathcal{U}_f$  $C^{1}$-neighborhood  of $f$ such that for every
 $g\in\mathcal{U}_f$ and for every $p\in \T^{2}$ there exists an inverse branch
 $\{p_n\}_{n\in\Z^-}$, by $g$,  of $p$ such that $p_n\in \T^{2}\setminus B((x_0,y_0),3r)$ for all $n< 0$.
\end{lema}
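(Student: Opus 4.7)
The approach is inductive: I construct a $g$-preimage $p_{-1}$ of $p_{0}=p$ lying in $\T^{2}\setminus\overline{B((x_{0},y_{0}),3r)}$ and then iterate. The whole construction rests on a single fact about $A$ that is then transferred to $g$ by $C^{1}$-closeness.

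The key observation about $A$ is that $A\colon\T^{2}\to\T^{2}$ is a covering of degree $k=|\det A|\geq 2$ whose $k$ preimages of any point are uniformly separated: two distinct preimages differ by a nonzero element of $A^{-1}\Z^{2}/\Z^{2}$, so their distance on $\T^{2}$ is bounded below by
$$d:=\min\{|v|:v\in A^{-1}\Z^{2}\setminus\Z^{2}\}>0,$$
a constant depending only on $A$. The condition $\overline{B((x_{1},y_{1}),3r)}\cap\overline{B((x_{0},y_{0}),3r)}=\emptyset$ from the setup, together with $A(x_{0},y_{0})=A(x_{1},y_{1})$, already yields $6r<d$; set $\eta:=(d-6r)/3>0$. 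Then $\overline{B((x_{0},y_{0}),3r+\eta)}$ has diameter $6r+2\eta<d$, hence contains at most one $A$-preimage of any point of $\T^{2}$. In particular, for every $p\in\T^{2}$, at least $k-1\geq 1$ of the $A$-preimages of $p$ lie at distance $\geq 3r+\eta$ from $(x_{0},y_{0})$.

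To pass from $A$ to $g$, I invoke the identity $f\equiv A$ on $\T^{2}\setminus B((x_{0},y_{0}),r)$ from Remark \ref{rk44}(1). On the compact set $\T^{2}\setminus B((x_{0},y_{0}),2r)$, the map $A$ is a local diffeomorphism with uniformly bounded and uniformly invertible derivative; a standard compactness argument based on the inverse function theorem then gives a $C^{1}$-neighborhood $\mathcal{U}_{f}$ of $f$ such that for every $g\in\mathcal{U}_{f}$ and every $p\in\T^{2}$, each $A$-preimage $q$ of $p$ with $q\notin B((x_{0},y_{0}),2r)$ is shadowed by a $g$-preimage $q'$ of $p$ with $|q-q'|<\eta$. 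Combining this with the previous paragraph produces a $g$-preimage of $p$ at distance $\geq 3r$ from $(x_{0},y_{0})$. Setting $p_{0}=p$ and iterating the construction yields the desired inverse branch $\{p_{n}\}_{n\leq 0}$, with $p_{n}\in\T^{2}\setminus B((x_{0},y_{0}),3r)$ for all $n<0$.

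The main obstacle I expect is making the uniform shadowing precise: one must verify that the inverse function theorem estimates depend only on uniform bounds for $A$ on the compact annular region $\T^{2}\setminus B((x_{0},y_{0}),2r)$ and not on the particular preimage $q$, so that the size of $\mathcal{U}_{f}$ can be chosen independently of $p\in\T^{2}$. The rest is bookkeeping with the lattice geometry of $A$-preimages.
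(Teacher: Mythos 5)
Your overall plan coincides with the paper's: the paper's proof is essentially the one-line assertion that, by construction, every point has a $g$-preimage outside $B((x_0,y_0),3r)$ for all $g$ in a uniform $C^1$-neighborhood, followed by the same induction, and your transfer of $A$-preimages to $g$-preimages via the inverse function theorem on the region where $f\equiv A$ is a correct way to make that assertion precise. The gap is in your lattice step. From the disjointness of $\overline{B((x_0,y_0),3r)}$ and $\overline{B((x_1,y_1),3r)}$ you can only conclude $\mathrm{dist}((x_0,y_0),(x_1,y_1))>6r$, and this distance is bounded \emph{below} by $d=\min\{|v|:v\in A^{-1}\Z^2\setminus\Z^2\}$, not above: the pair $(x_0,y_0),(x_1,y_1)$ is just one pair of $A$-preimages of $A(x_0,y_0)$ and need not realize the minimal separation. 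So ``$6r<d$'' does not follow from the setup. It happens to be true when $|\det A|=2$ (there is only one nontrivial deck class), but for $|\det A|\geq 3$ nothing in the construction prevents $d\leq 6r$, since $r$ is constrained only by the two displayed conditions, which involve $(x_1,y_1)$ and $A(B((x_0,y_0),3r))$ but not the shortest nonzero vector of $A^{-1}\Z^2$ modulo $\Z^2$. Consequently your claim that $\overline{B((x_0,y_0),3r+\eta)}$ contains at most one $A$-preimage of any point, hence that $k-1$ preimages lie far from $(x_0,y_0)$, is unjustified as stated.

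The repair is short and uses exactly the datum the paper introduces $(x_1,y_1)$ for. Put $v=(x_1,y_1)-(x_0,y_0)$; since $A(x_1,y_1)=A(x_0,y_0)$ on $\T^2$, $v$ is a deck translation for $A$, i.e.\ adding $v$ to any $A$-preimage of $p$ produces another $A$-preimage of $p$. Choose $\eta>0$ with $2\eta\leq \mathrm{dist}\bigl(\overline{B((x_0,y_0),3r)},\overline{B((x_1,y_1),3r)}\bigr)$. If an $A$-preimage $q$ of $p$ lies in $B((x_0,y_0),3r+\eta)$, then $q+v$ is an $A$-preimage of $p$ lying in $B((x_1,y_1),3r+\eta)$, and every point of that ball is at distance at least $3r+\eta$ from $(x_0,y_0)$; if no preimage lies in $B((x_0,y_0),3r+\eta)$ there is nothing to do. Either way every $p\in\T^2$ has an $A$-preimage at distance at least $3r+\eta$ from $(x_0,y_0)$, with $\eta$ uniform and with no assumption on $d$ or on $|\det A|$. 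Such a preimage is in particular outside $B((x_0,y_0),2r)$, so your inverse-function-theorem shadowing step and the induction go through verbatim and yield the lemma.
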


\begin{proof} From the construction of $f$ there exists a $C^{1}$-neighborhood $\mathcal{U}_f$ of $f$
such that for all $g\in\mathcal{U}_f$ and for all $p\in \T^{2}$ there exists
$p_{-1}\in \T^{2}\setminus B((x_0,y_0),3r)$ such that $g(p_{-1})=p.$
Given $p\in\T^{2}$, taking $p_0=p$, there exists $p_{-1}\in \T^{2}\setminus B((x_0,y_0),3r)$ such that $g(p_{-1})=p_0.$
Proceeding inductively we conclude the lemma.
\end{proof}

Let $w$ be the irrational direction given by the eigenvector of $A$  associated to the eigenvalue of absolute value greater than one. Note that as we are considering the matrix  $A$ as a diagonal matrix then $w=(1,0)$. For $\varepsilon =r$ and $w_1$ the stable direction of $A$, consider  $a_1>0$ and $M>0$
 given by Lemma \ref{conos}. For $a_1, \delta, \theta,$ with $\delta<2\theta<r,$ there exist  $a_0<a_1$, $\delta_0<\delta$ and $\theta_0<\theta$ such that
  Lemma \ref{conos}, \ref{conos-inestablesII} and \ref{conos-estables} are satisfied.
So considering $f=f_{\delta_0,\theta_0 }$ and $a_0$ as before,
 by Lemma \ref{clly1} there exists a $C^{1}$-
neighborhood $\mathcal{U}_f$ of $f$ such that for every $g\in
\mathcal{U}_f$  the properties  $(i),$  $(ii)$ and $(iii)$  of Lemma \ref{conos-estables} hold.

\begin{proof}[\textbf{Proof of Proposition \ref{prop111}}]
Let $\mathcal{U}_f$ be as above and $g\in \mathcal{U}_f$. Given $V_1,V_2$ open sets of $\T^{2}$ we will prove that there exists $n_0\in\N$ such that  $g^{n_{0}}(V_1)\cap V_2\neq\emptyset$. Let $\gamma $ be a curve, $\gamma\subset V_1$ such that
$\gamma^{'}(t)\subset \mathcal{C}^u_{a_0}(\gamma(t))$ and $n_1\in\N$ such that
$\ell(g^{n_{1}}(\gamma (t)))>M$. On the other hand, let  $p\in V_2$ and
$q\in \T^{2}\setminus B((x_0,y_0),3r)$ be with $g(q)=p$ and $W$ a neighborhood of $q$ such that
$g(W)\subset V_2$ and $W\subset  \T^{2}\setminus B((x_0,y_0),3r)$.
By Lemma \ref{preorbita} there exists an inverse branch $\{q_n\}_{n\in\Z^{-}}$  of $q$ such that
$q_n\in \T^{2}\setminus B((x_0,y_0),3r)$ for all $n< 0$.

 We construct a finite family of curves $\beta_0,\ldots,\beta_m$ such that $\beta_0(0)=q,\ldots,\beta_m(0)=q_m$
 with $\beta'_i(t)\subset \mathcal{C}^s_{a_0}(\beta_i(t))$ for $i=0,\ldots,m$ and $\ell(\beta_m)>r$.
  Let $\beta_0 :[-1,1]\to W$ be with $\beta_0(0)=q$ and $\beta'_0(t)\subset \mathcal{C}^s_{a_0}(\beta_0(t))$.
Let $\beta_1$ the longest curve with $\beta_1(0)=q_{-1}$  such that $g(\beta_1)\subseteq \beta_0.$
If
$\beta_1\cap B((x_0,y_0),r)=\emptyset$, by Lemma \ref{conos-estables} we have that  $\ell(\beta_1)>\mu_1 \ell(\beta_0)$.
Otherwise, 
$\beta_1 \cap B((x_0,y_0),r)\neq\emptyset$.
Since $dist(q_{-1},B((x_0,y_0),r))>2r,$ we obtain that
 $\ell(\beta_1)>r$. Proceeding inductively we obtain the family we requiered.

Now, by Lemma \ref{conos} we have that $g^{n_1}(\gamma)\cap\beta_m\neq\emptyset$.
As $\gamma\subset V_1$ and $\beta_m\subset g^{-m-1}(V_2)$, $g^{n_1+m+1}(V_1)\cap V_2\neq\emptyset$.
Taking $n_0=n_1+m+1$ we finish the proof.

\end{proof}

\section{Proof of Theorem \ref{teoA}}\label{sec2}

In this section we prove that for every robustly transitive map with persistent critical set,
the dimension of the kernel of the differential is less or equal $1$. Moreover,
there exists a sufficiently close transitive map with non-empty interior of the critical set and exhibiting 
a residual set of critical points with dense forward orbit.


Let $f\in C^{r}(M),$ 
we say that
$(x_0,y_0)\in S_{f}$ is a critical point of \textbf{fold type} if there exist neighborhoods $U$
and $V,$ of $(x_0,y_0)$ and $f(x_0,y_0)$ respectively, and local diffeomorphisms
$\psi_1:\mathbb{R}^{2}\to U$ and $\psi_2:V\to \mathbb{R}^{2}$ such that
$\psi_2\circ f\circ \psi_1 (x,y)=(x,y^{2})$, for every $(x,y)\in U$.
A point $(x_0,y_0)\in S_{f}$ is a critical point of  \textbf{cusp type} if
there exist neighborhoods and local diffeomorphisms as above such that
 $\psi_2\circ f\circ \psi_1(x,y)=(x,-xy+y^{3}) $ for every $(x,y)\in U$.

For proving Theorem \ref{teoA} we need to invoke a very classical result of singularities theory.
Let us state the Theorem of Whitney,
which classify the critical points of a generic set of endomorphisms
of class $C^3$ in any manifold of dimension two.

\begin{thm}[Whitney, \cite{w}] \label{teo1}
There exists an open and dense set ${\mathcal G}(M)$ of $C^{r}(M)$ $(r\geq 3)$
such that for every $f\in {\mathcal G }(M)$
holds that:
\begin{enumerate}
\item[i)] $S_f$ is either a submanifold of dimension $1$ or an empty set.
\item[ii)] Every critical point of $f$ is either of fold or cusp type.
\item[iii)] The cusp critical points are isolated and continuous with respect to $f$.
\end{enumerate}
\end{thm}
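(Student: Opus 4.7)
The plan is to prove this via jet transversality, following Thom's program for classifying singularities of smooth maps. The key observation is that each of the three assertions can be recast as a transversality requirement for an appropriate jet extension $j^k f : M \to J^k(M,M)$, after which Thom's transversality theorem delivers an open dense set of $f$ satisfying it. Since $r\geq 3$ and $\dim M = 2$, the relevant jet bundles are finite-dimensional smooth manifolds, so the machinery applies cleanly.

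First I would dispose of (i). The critical locus corresponds to the closed subset $\Sigma^{1}\subset J^{1}(M,M)$ of 1-jets whose linear part has rank less than $2$. A direct linear-algebra calculation shows $\Sigma^{1}$ is a smooth submanifold of codimension $1$ in $J^{1}(M,M)$. By Thom's transversality theorem, the set $\{f\in C^{r}(M) : j^{1}f \pitchfork \Sigma^{1}\}$ is open and dense, and when transversality holds, $S_f = (j^{1}f)^{-1}(\Sigma^{1})$ is either empty or a $1$-dimensional submanifold by the preimage theorem.

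For (ii) the analysis must refine to second order. Inside $\Sigma^{1}$ sits a substratum $\Sigma^{1,1}$ consisting of those 1-jets for which $\ker Df$ becomes tangent to $S_f$; working on $J^{2}(M,M)$ one sees it is a smooth manifold of strictly larger codimension. A point $p\in S_f$ with $j^{1}f(p) \notin \Sigma^{1,1}$ is a fold, and after imposing the transversality $j^{2}f \pitchfork \Sigma^{1,1}$ the Malgrange/Weierstrass preparation theorem produces the local normal form $(x,y^{2})$. At points of $\Sigma^{1,1}$, a further third-order nondegeneracy (transversality of $j^{3}f$ to a codimension-$3$ stratum $\Sigma^{1,1,0}$ inside $\Sigma^{1,1}$) yields the cusp normal form $(x,-xy+y^{3})$. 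Genericity of all these transversality conditions simultaneously is again supplied by Thom's theorem, and a Baire argument assembles them into a single open dense set $\mathcal{G}(M)$.

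Finally (iii) is almost immediate from the stratification: $\Sigma^{1,1,0}$ has codimension $2$ in the $2$-dimensional source, so transverse intersections of $j^{3}f$ with it are isolated points, i.e.\ cusps are isolated. Continuity under $C^{r}$ perturbation follows because a cusp is cut out by the transverse vanishing of finitely many local functions depending smoothly on $f$, and the implicit function theorem furnishes a nearby cusp for every nearby $g$. The main obstacle is the passage from transversality to rigid local normal forms, which is not formal and relies on the Malgrange preparation theorem; the jet-transversality framework alone guarantees the stratified structure of $S_f$ but not the explicit models $(x,y^{2})$ and $(x,-xy+y^{3})$.
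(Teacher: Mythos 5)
This theorem is not proved in the paper at all: it is Whitney's classical classification of generic singularities of maps of surfaces, and the authors simply quote it from \cite{w}. So there is no internal argument to compare against; what you have written is a sketch of the standard modern route (Thom--Boardman jet transversality plus normal-form theory, as in Golubitsky--Guillemin), which is a legitimate and in fact the usual way to prove the statement today, and is quite different in flavor from Whitney's original 1955 proof, which predates Thom transversality and the Malgrange preparation theorem and proceeds by direct analysis of Taylor expansions and explicit coordinate changes. Your closing remark is the right caveat: transversality alone gives the stratified structure of $S_f$ and the fold/cusp dichotomy at the jet level, while the rigid models $(x,y^{2})$ and $(x,-xy+y^{3})$ require the separate normal-form step.

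Two points in your sketch need repair. First, the set of $1$-jets ``whose linear part has rank less than $2$'' is \emph{not} a smooth codimension-$1$ submanifold of $J^{1}(M,M)$: the determinant-zero locus in $2\times 2$ matrices is a quadric cone, singular exactly at the zero matrix. You must stratify it into the corank-$1$ part (smooth, codimension $1$) and the corank-$2$ part (codimension $4$), and demand transversality of $j^{1}f$ to both strata; since $\dim M=2<4$, transversality to the corank-$2$ stratum means $j^{1}f$ avoids it, so generically $Df$ never vanishes and the preimage-theorem argument for $S_f$ being a $1$-manifold (or empty) goes through. Without this the step ``$S_f=(j^{1}f)^{-1}(\Sigma^{1})$ is a submanifold'' fails precisely at points where $Df_p=0$. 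Second, your bookkeeping of the deeper strata is inconsistent: the cusp points form the stratum usually denoted $\Sigma^{1,1,0}$, of codimension $2$ (which is why they are isolated in the $2$-dimensional source), while the stratum that must be \emph{avoided} by a third-order nondegeneracy is $\Sigma^{1,1,1}$, of codimension $3$; you assign codimension $3$ to $\Sigma^{1,1,0}$ in part (ii) and codimension $2$ to the same symbol in part (iii). Finally, to get \emph{open} and dense (not merely residual) in $C^{r}(M)$, you should invoke compactness of $M$ and the fact that transversality to a closed Whitney-stratified subset of the (finite-dimensional) jet space is an open condition, together with $r\geq 3$ so that the relevant $3$-jet data vary continuously with $f$; this also yields the persistence-of-cusps statement in (iii) via the implicit function theorem, as you indicate.
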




Now we are able to prove Proposition \ref{prop-teoA}.

\begin{proof}[\textbf{Proof of Proposition \ref{prop-teoA}}]
Given $f\in C^1(M)$ robustly transitive with persistent
critical points  and a $C^1-$neighborhood $\mathcal{U}_{f}$ of $f,$ since the maps of class $C^3$ are $C^1-$dense in the set of maps
of class $C^1,$ then there exists $g\in \mathcal{U}_{f}$ of class $C^3$ satisfying the conditions of Theorem \ref{teo1}.
Without loss of generality we may assume that $(0,0)$ is a critical point of fold type, hence there exists a neighborhood
$U$ of $(0,0)$ such that $g(x,y)= (x, y^2)$ for every $(x,y)\in U.$

Let us prove the theorem for $g$. That is, there exists $\bar{g}$ $C^1-$close to $g$ such as the interior of the critical set is non-empty.
Given $\varepsilon>0,$ choose $\delta>0$ such that $4\delta<\varepsilon$ and $\delta^2<\varepsilon.$
Consider $\varphi:\mathbb{R}\to \mathbb{R}$ a bump function of class $C^1$ such as in Figure \ref{figura3} with
$|\varphi'|\leq \frac{2}{\delta}$.
Consider $\bar{g}$ a $C^1$-perturbation of $g$ defined by $\bar{g}(x,y)=(x,\varphi(y)y^2)$, for $(x,y)\in U.$
Since it is not hard to prove that the distance between $\bar{g}$ and $g$ is less than $\varepsilon$
in the $C^1$ topology, we leave the details for the reader.
Moreover, the critical set of $\bar{g}$ contain the ball centered  at $(0,0)$ and radius $\frac{\delta}{2}.$
 \end{proof}

\begin{figure}[ht]
\psfrag{d}{\tiny{$\delta$}}\psfrag{d2}{\tiny{$\delta/2$}}
\psfrag{-d2}{\tiny{$-\delta /2$}}
\psfrag{-d}{\tiny{$-\delta$}}
\psfrag{1}{\tiny{$1$}}
\psfrag{aa}{\tiny{$\theta$}}
\psfrag{q}{\tiny{$\psi$}}
\psfrag{phi}{\tiny{$\varphi$}}
\begin{center}
\includegraphics[scale=0.15]{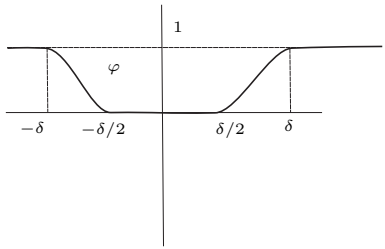}
\caption{Graph of $\varphi$}\label{figura3}
\end{center}
\end{figure}

Using Theorem \ref{teo1} and Proposition \ref{prop-teoA} follows the proof of Theorem \ref{teoA}.

\begin{proof} [\textbf{Proof of Theorem \ref{teoA}}]

Proof of $(1).$ Given $f$ as in the statement, let $g$ be as in item $(1)$. Since
$g$ is transitive, there exists a residual set of points with dense orbit for $g$.
Moreover, by Proposition \ref{prop-teoA} the interior of the critical set $S_g$ of $g$ is non-empty, then
there exists a residual set in $S_g$ with dense orbit.

Proof of $(2).$ Finally we prove that the kernel of the differential of robustly transitive maps has dimension less
or equal $1$. In fact, let us assume by contradiction that there exists $f(x,y)=(f_1(x,y),f_2(x,y))$ robustly transitive map such as
$$\frac{\partial f_i}{\partial x}(0,0)=\frac{\partial f_i}{\partial y}(0,0)=0,\; i=1,2.$$
Given $\varepsilon>0,$ choose $\delta>0$ such that
\begin{itemize}
\item $f(B((0,0),\delta))\subseteq B(f(0,0),\varepsilon/5);$
\item If $(x,y)\in B((0,0),\delta),$ then
        $\frac{\partial f_i}{\partial x}(x,y)< \varepsilon/5,\;
        \frac{\partial f_i}{\partial y}(x,y)<\varepsilon/5,$ for $i=1,2.$
\end{itemize}

Let $\varphi:\mathbb{R}\to\mathbb{R}$ such as in the Figure \ref{figura3} and $|\varphi'|<2/\delta.$
Define $g(x,y)=f(0,0)+\varphi(x^2+y^2)(f(x,y)-f(0,0)).$ Note that $g(B((0,0),\delta/2))=f(0,0).$ Thus, $g$
is not transitive obtaining a contradiction. We leave the details of the calculation for the reader.
\end{proof}

\subsection*{Acknowledgments:}
The authors are grateful to E. Pujals and A. Rovella for
useful and encouraging conversations and suggestions. The authors
are also grateful for the nice environment provided by IMPA, DMAT (PUC-Rio),
Universidad de Los Andes(Venezuela) and Universidad de la Rep\'ublica(Uruguay)
during the preparation of this paper. We are grateful for the valuable corrections
and careful reading of the anonymous referee.  

%


\end{document}